\DeclareMathOperator{\rk}{\mathrm{rk}}
\DeclareMathOperator{\red}{\mathrm{red}}
\DeclareMathOperator{\alg}{\mathrm{alg}}
\DeclareMathOperator{\OO}{\mathscr{O}}
\DeclareMathOperator{\PP}{\mathbb{P}}
\DeclareMathOperator{\Sing}{\mathrm{Sing}}
\DeclareMathOperator{\QQ}{\mathbb{Q}}
\DeclareMathOperator{\FF}{\mathscr{F}}
\DeclareMathOperator{\GG}{\mathscr{G}}
\DeclareMathOperator{\Var}{\mathrm{Var}}
\DeclareMathOperator{\reg}{\mathrm{reg}}
\DeclareMathOperator{\Zar}{\mathrm{Zar}}
\DeclareMathOperator{\inv}{\mathrm{inv}}
\begin{document} 

\title{Towards classification of codimension 1 foliations on threefolds of general type}
\date{}
\author{Aleksei Golota}
\thanks{This work is supported by the Russian Science Foundation under grant 18-11-00121.}

\newtheorem{theorem}{Theorem}[section] 
\newtheorem*{ttheorem}{Theorem}
\newtheorem{lemma}[theorem]{Lemma}
\newtheorem{proposition}[theorem]{Proposition}
\newtheorem*{conjecture}{Conjecture}
\newtheorem{corollary}[theorem]{Corollary}

{\theoremstyle{remark}
\newtheorem{remark}[theorem]{Remark}
\newtheorem{example}[theorem]{Example}
\newtheorem{notation}[theorem]{Notation}
\newtheorem{question}{Question}
}

\theoremstyle{definition}
\newtheorem{construction}[theorem]{Construction}
\newtheorem{definition}[theorem]{Definition}

\begin{abstract} We aim to classify codimension 1 foliations $\FF$ with canonical singularities and $\nu(K_{\FF}) < 3$ on threefolds of general type. We prove a classification result for foliations satisfying these conditions and having non-trivial algebraic part. We also describe purely transcendental foliations $\FF$ with the canonical class $K_{\FF}$ being not big on manifolds of general type in any dimension, assuming that $\FF$ is non-singular in codimension $2$.
\end{abstract}

\maketitle

\section{Introduction}

The Minimal Model Program (MMP) is one of the main guiding principles in the study of algebraic varieties. It predicts that every projective variety $X$ with mild singularities should have a birational model $X'$ which is also mildly singular and either has $K_{X'}$ nef (minimal model) or admits a Mori fiber space. So, informally speaking, all algebraic varieties should be constructed, using birational equivalences and fibrations, from those of the three classes: $K_X$ positive (general type), $K_X$ numerically trivial (Calabi--Yau) and $K_X$ negative (Fano).

The works of Brunella \cite{Bru97, Bru03, Bru15}, Mendes \cite{Men00} and McQuillan \cite{McQ08} established a version of the MMP for holomorphic foliations on surfaces. Analogously to the case of varieties, for a foliation $\FF$ with mild singularities it is possible to define the canonical class $K_{\FF}$ and its invariants such as Kodaira and numerical dimension. Foliations on surfaces admit a classification according to these invariants; moreover, this classification is rather explicit for foliations ``not of general type''. The final step in the classification of minimal models of foliations on surfaces is the following theorem (see \cite[Theorem on p. 122]{Bru03} and \cite[Section IV.5]{McQ08}).

\begin{theorem}\label{BruMcQ} Let $\FF$ be a foliation with reduced singularities on a surface $S$ of general type. Suppose that $K_{\FF}$ is nef and not big. Then either 
\begin{enumerate}
\item $\FF$ is algebraically integrable and induced by an isotrivial fibration, $\kappa(K_{\FF}) = \nu(K_{\FF}) = 1$;
\item $\FF$ is transcendental and the pair $(S, \FF)$ is isomorphic to a Hilbert modular surface with a Hilbert modular foliation, $\kappa(K_{\FF}) = -\infty, \nu(K_{\FF}) = 1$.
\end{enumerate}
\end{theorem}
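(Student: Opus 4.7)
The plan is to run the standard dichotomy between algebraic integrability and transcendence of $\FF$, using the numerical relation $K_S \equiv K_\FF + N_\FF$ with $N_\FF$ the normal line bundle of $\FF$. Since $K_\FF$ is nef and not big on a surface, one has $K_\FF^2 = 0$ and $\nu(K_\FF) \in \{0,1\}$.

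First I would rule out $\nu(K_\FF) = 0$. If $K_\FF \equiv 0$, then $N_\FF \equiv K_S$ is big and nef; Brunella's Bogomolov--Miyaoka type inequality for foliations with reduced singularities (built on Miyaoka's generic semipositivity of $\Omega^1_S$) bounds $N_\FF^2$ in terms of the Chern numbers of $S$, and this is incompatible with $N_\FF \equiv K_S$ big when $S$ is of general type. Hence $\nu(K_\FF) = 1$.

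Next I would split on $\kappa(K_\FF)$. If $\kappa(K_\FF) = 1$, the abundance-type theorem for foliated surfaces (Brunella, McQuillan) shows that some multiple of $K_\FF$ defines a morphism $\varphi : S \to C$ whose general fibres are the leaves of $\FF$, so $\FF$ is algebraically integrable and induced by a fibration. Writing $K_\FF = K_{S/C} + \Delta$ with $\Delta$ supported on fibre components, the condition $K_\FF^2 = 0$ combined with an Arakelov--Parshin-type bound forces the fibration to be isotrivial: a non-isotrivial fibration by curves of genus $\ge 1$ would give $(K_{S/C})^2 > 0$ and hence $K_\FF$ big, while rational fibres would make $S$ uniruled. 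This yields case (1). The intermediate value $\kappa(K_\FF) = 0$ would force $\FF$ to lie in Brunella's classification of foliations with vanishing foliated Kodaira dimension (birational to Abelian-quotient or elliptic-fibre models), which is incompatible with $S$ of general type.

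The remaining case is $\kappa(K_\FF) = -\infty$, in which $\FF$ is necessarily transcendental. Here I would invoke McQuillan's theorem: combining the tautological inequality for Ahlfors currents, Baum--Bott residue analysis, and Miyaoka's positivity of $\Omega^1_S$, one rules out the existence of parabolic leaves on a surface of general type and establishes uniform hyperbolicity of the remaining leaves. This lets one identify the leafwise universal cover of $(S,\FF)$ with $\mathbb{H} \times \mathbb{H}$ carrying $\FF$ as one of the two horizontal foliations; Margulis arithmeticity then forces the covering group inside a Hilbert modular group, giving case (2). The main obstacle is precisely this last case: the Hilbert modular characterisation is the technical core of McQuillan's work, resting on his whole theory of currents along foliated leaves. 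Excluding parabolic leaves on general-type surfaces, and promoting leafwise hyperbolicity to a global product uniformisation, are the steps where the heavy machinery is concentrated.
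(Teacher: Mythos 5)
Note first that Theorem~\ref{BruMcQ} is not proved in this paper: it is stated as a known result, with citations to Brunella \cite{Bru03} and McQuillan \cite{McQ08}, so there is no internal proof for your attempt to be measured against. Judged as a sketch of those cited arguments, your outline has the right overall shape---reduce to $\nu(K_{\FF})=1$, split on $\kappa(K_{\FF})$, and invoke McQuillan's Hilbert-modular uniformisation in the transcendental case---but two of the intermediate claims carry sign errors that would break the argument. The adjunction relation on a foliated surface is $K_S = K_{\FF} + N^*_{\FF}$ (conormal, not normal; cf.\ the formula $K_X = K_{\FF} + \det(N^*_{\FF})$ recalled in the paper), so $K_{\FF}\equiv 0$ yields $N_{\FF}\equiv -K_S$, not a big and nef normal bundle, and the Bogomolov--Miyaoka route you propose does not close. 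In the cited proofs the case $\nu(K_{\FF})=0$ (equivalently $K_{\FF}\equiv 0$ by Hodge index, since $K_{\FF}$ is nef) is eliminated through the structure theorem for foliations with $\kappa(K_{\FF})=0$: after a suitable finite cover and birational contraction such a foliation is generated by a global holomorphic vector field, which is impossible on a surface of general type because the automorphism group of $S$, and of any of its finite covers, is finite.

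Second, the canonical class of a foliation induced by a fibration is $K_{\FF} = K_{S/C} - R(\pi)$ with the ramification \emph{subtracted} (Proposition~\ref{canclassalg}), not $K_{\FF} = K_{S/C}+\Delta$, so your intersection-number chain from $K_{\FF}^2=0$ to $(K_{S/C})^2=0$ and then to isotriviality does not go through as written: one is subtracting an effective class whose self-intersection may be negative. The mechanism in the cited proofs is Kodaira-dimension positivity of direct images in the style of Arakelov, Fujita and Kawamata---a non-isotrivial relatively minimal fibration by curves of genus $\geqslant 2$ has $\kappa(K_{S/C})=2$---combined with the observation that the other $\kappa(K_{\FF})=1$ families in Brunella's classification (Riccati, turbulent, non-isotrivial elliptic) require a $\PP^1$- or elliptic fibration and so cannot occur on a surface of general type. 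Your summary of the transcendental case is a fair account of McQuillan's theorem, and you have correctly located where the genuine difficulty lies.
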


The setting of the above theorem is interesting from several points of view. It involves the interplay between positivity of $K_X$ and $K_{\FF}$ (see e.g. \cite{CP15, CP19} for related topics); the failure of abundance for foliations (see \cite{McQ08, Tou16}). Another context, in which these foliations appear, is the study of Kobayashi hyperbolicity (see e.g. \cite{Kob70, Dem97}). Entire curves tangent to foliations on surfaces have been studied in connection with the Green--Griffiths conjecture (see \cite{McQ98, Bru99, DR15}). In higher dimensions we have, for example, the following theorem \cite[Theorem F]{GPR13}.

\begin{theorem} Let $\FF$ be a codimension one foliation with canonical singularities on a projective threefold $X$. Suppose that there exists a generically nondegenerate meromorphic map $f \colon \mathbb{C}^2 \dasharrow X$ such that the image $f(\mathbb{C}^2)$ is Zariski-dense in $X$ and tangent to $\FF$. Then the canonical class of $\FF$ is not big.
\end{theorem}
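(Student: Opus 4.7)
The plan is to argue by contradiction: assuming $K_{\FF}$ is big, I would extract from $f$ a Zariski-dense Ahlfors-type current on $X$ and derive an impossibility by confronting it with a foliated tautological inequality. By standard reductions one may assume $f\colon \mathbb{C}^2 \to X$ is holomorphic (blowing up the codimension-$2$ indeterminacy if needed and using that the Ahlfors-current construction is insensitive to such a modification). Fix a K\"ahler form $\omega$ on $X$ and the standard form $\omega_0$ on $\mathbb{C}^2$, and set the Nevanlinna characteristic $A_f(r) := \int_{B_r} f^*\omega \wedge \omega_0$ over the ball $B_r \subset \mathbb{C}^2$. Normalising the currents of integration, $T_r := (f|_{B_r})_*[B_r] / A_f(r)$, weak compactness yields a sequence $r_n \to \infty$ such that $T_{r_n}$ converges to a closed positive $(1,1)$-current $T_f$ on $X$, of bidimension $(2,2)$ and unit mass against $\omega$.

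Next I would show that $T_f$ does not charge any proper algebraic subvariety $Z \subsetneq X$. For irreducible hypersurfaces, this rests on the Zariski density of $f(\mathbb{C}^2)$: the counting function $N_f(r, Z)$ grows strictly more slowly than $A_f(r)$, whence $T_f$ carries no mass on $Z$. Subvarieties of codimension $\geq 2$ carry no $T_f$-mass by bidimension. As a consequence, for any effective divisor $E$ and any ample class $H'$ on $X$, the product $T_f \wedge [E] \wedge c_1(H')$ is well defined and nonnegative.

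The crux is the tautological inequality for $f$: for every ample class $H$ on $X$,
\[
\int_X T_f \wedge c_1(K_{\FF}) \wedge c_1(H) \leq 0.
\]
This is the extension, to maps $\mathbb{C}^2 \to X$ tangent to a codimension-$1$ foliation, of the Brunella--McQuillan inequality for entire curves tangent to surface foliations. Local sections of $K_{\FF}$ pull back via the tangency to holomorphic $2$-forms on open sets of $\mathbb{C}^2$, and an Ahlfors--Schwarz-type estimate on the ball exhaustion forces the asymptotic pairing with $T_f$ to be nonpositive. The canonical-singularities hypothesis on $\FF$ is needed to replace $\FF$ by a foliated resolution without increasing $K_{\FF}$.

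To conclude, bigness of $K_{\FF}$ and Kodaira's lemma give $m \geq 1$ and a linear equivalence $m K_{\FF} \sim A + E$ with $A$ ample and $E$ effective. Fixing an ample $H$, we compute
\[
m \int_X T_f \wedge c_1(K_{\FF}) \wedge c_1(H) = \int_X T_f \wedge c_1(A) \wedge c_1(H) + \int_X T_f \wedge [E] \wedge c_1(H).
\]
The first term is strictly positive since $A, H$ are ample and $T_f$ is a nonzero positive current, and the second is nonnegative by the Zariski-density step; this contradicts the tautological inequality and forces $K_{\FF}$ not to be big. The principal obstacle is clearly the tautological inequality itself in this higher-dimensional singular foliated setting; it demands a value-distribution argument beyond the classical one-variable case, together with careful control of $K_{\FF}$ under a foliated resolution --- exactly where the canonical-singularities assumption enters.
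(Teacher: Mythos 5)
The statement you are asked to prove is not proved in this paper: it is quoted verbatim in the introduction as Theorem~F of Gasbarri--Pacienza--Rousseau (\cite{GPR13}, ``Higher-dimensional tautological inequalities and applications'') as background motivation, and the body of the paper contains no argument for it. There is therefore no ``paper's own proof'' to compare your sketch against. That said, your outline does follow the McQuillan-style strategy (Ahlfors current of the map plus a tautological-type inequality for the foliated tangent map, then a Kodaira-lemma contradiction against bigness), which is indeed the circle of ideas in \cite{GPR13}, so you have identified the right reference and the right toolkit.

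Two points in your sketch deserve caution, independently of the citation issue. First, the inequality $\int_X T_f \wedge c_1(K_{\FF}) \wedge c_1(H) \leqslant 0$ for a generically nondegenerate $f\colon\mathbb{C}^2\dasharrow X$ tangent to a codimension-$1$ foliation on a threefold is not a routine extension of the one-variable Brunella--McQuillan tautological inequality; it is essentially the entire content of \cite{GPR13}, and as written your proposal simply asserts it (you acknowledge this, but it means the proposal reduces the theorem to a result of comparable depth without proving it). Second, the step ``Zariski density of $f(\mathbb{C}^2)$ implies $T_f$ carries no mass on any hypersurface $Z$'' is not automatic: a Zariski-dense parabolic leaf can a priori contribute a nonzero divisorial piece in the Siu decomposition of its Ahlfors current, and $N_f(r,Z)=o(A_f(r))$ does not follow from density alone. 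What is actually needed for your final contradiction is only that $T_f\cdot E \cdot H \geqslant 0$ for the effective part $E$ in the Kodaira decomposition; this is obtained from the First Main Theorem and the vanishing of boundary terms along a good exhaustion, not from a ``no-mass'' assertion, and you should argue it that way (or else subtract the divisorial part of $T_f$ and rerun the estimate on the residual current). With those two caveats made precise, the deduction in your final paragraph (ample times pseudoeffective nonzero current is strictly positive, effective part is nonnegative, contradiction with the tautological inequality) is sound.
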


Recently, in a series of works \cite{Spi20, CS21, SS19} the MMP for codimension $1$ foliations on threefolds has been established. Moreover, some structure theorems for codimension $1$ foliations are known in higher dimensions, for example in the case $K_{\FF} \equiv 0$ \cite{LPT18, Dru21} or $-K_{\FF}$ ample \cite{AD13, AD17, AD19}. These results suggest that the MMP-type classification for codimension $1$ foliations should hold in any dimension. 

Motivated by the above results, we would like to describe codimension $1$ foliations $\FF$ with canonical singularities and $\nu(K_{\FF}) < 3$ on threefolds of general type. In this paper we are able to prove the following partial analogue of Theorem \ref{BruMcQ}.

\begin{ttheorem}[A] Let $\FF$ be a codimension one foliation with canonical singularities on a smooth projective threefold $X$ of general type. Suppose that $K_{\FF}$ is not big and the algebraic rank $r_a(\FF)$ is positive. Then one of the following two cases occurs.
\begin{enumerate}
\item $\FF$ is algebraically integrable. Then there exists a generically finite morphism $f \colon X' \to X$ such that the threefold $X'$ is birational to the product of a surface $S$ and a curve $C$, and the foliation $f^{-1}\FF$ is birationally equivalent to the relative tangent bundle of the projection $S \times C \to C$.
\item The algebraic rank of $\FF$ is equal to $1$. Then there exists a birational morphism $f \colon X' \to X$ such that the threefold $X'$ admits a birationally isotrivial fibration $\pi \colon X' \to S$ to a surface $S$ and the foliation $f^{-1}\FF$ is the pullback via $\pi$ of a foliation $\GG$ on $S$ such that $\nu(K_{\GG}) = 1$. Moreover, if $\FF$ is non-abundant, then $\GG$ is birationally equivalent to a Hilbert modular foliation.
\end{enumerate}
\end{ttheorem}

At the moment we do not know how to classify purely transcendental foliations with canonical singularities and not of general type on threefolds of general type. However, if we restrict ourselves to foliations regular in codimension $2$, then we are able to prove a result which confirms our expectations. The starting point for us is another remarkable theorem by Brunella (see \cite[pp. 587-588]{Bru97}).

\begin{theorem} \label{brureg}
Let $\FF$ be a regular foliation on a minimal surface $X$ of general type. Then the conormal bundle $N^*_{\FF}$ is numerically effective.
\end{theorem}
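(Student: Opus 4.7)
\emph{Proof proposal.} The regularity of $\FF$ means that the inclusion $T_\FF \hookrightarrow T_X$ is a subbundle, so dually we have a short exact sequence of vector bundles
$$ 0 \to N^*_\FF \to \Omega^1_X \to K_\FF \to 0, $$
exhibiting $N^*_\FF$ as a saturated line subbundle of $\Omega^1_X$, with $N^*_\FF \equiv K_X - K_\FF$. The plan is to first establish pseudoeffectivity of $N^*_\FF$, and then upgrade it to nefness using the Baum--Bott identity $(N^*_\FF)^2 = 0$, which holds for any regular foliation because all local Baum--Bott residues vanish in the absence of singularities.

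For pseudoeffectivity, $X$ being of general type is not uniruled. I would invoke Miyaoka's generic semipositivity for $\Omega^1_X$, in the strengthened form that on a non-uniruled surface every saturated line subsheaf of $\Omega^1_X$ is pseudoeffective, applied to $N^*_\FF \subset \Omega^1_X$. An alternative and more intrinsic route is to produce, à la Brunella--Sullivan, a closed positive harmonic $(1,1)$-current directed by $\FF$ whose cohomology class is $c_1(N^*_\FF)$.

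For the upgrade, let $N^*_\FF = P + N$ be the Zariski decomposition, where $P$ is nef, $N = \sum a_i E_i$ is effective with negative-definite intersection matrix $(E_i \cdot E_j)$, and $P \cdot E_i = 0$ for every component of $N$. Then $P \cdot N = 0$, so Baum--Bott gives
$$ 0 \;=\; (N^*_\FF)^2 \;=\; P^2 + N^2. $$
If $N \neq 0$, negative definiteness forces $N^2 < 0$, hence $P^2 > 0$; the nef divisor $P$ is then big, whence $\kappa(N^*_\FF) \geq \kappa(P) = 2$. This contradicts the Bogomolov--Sommese vanishing, which asserts $\kappa(L) \leq 1$ for any rank-one saturated subsheaf $L \hookrightarrow \Omega^1_X$ on a smooth projective surface. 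Therefore $N = 0$ and $N^*_\FF = P$ is nef.

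The main obstacle is the pseudoeffectivity step: classical Miyaoka-type generic semipositivity guarantees nefness of $\Omega^1_X$ only on general curves in covering families and does not directly yield pseudoeffectivity of a specific line subsheaf, so one must either mine extra information from the Miyaoka--Bogomolov arguments for line subsheaves of $\Omega^1_X$ or use a transcendental input such as harmonic currents. By contrast, once pseudoeffectivity is available, the combination of the Baum--Bott vanishing with Bogomolov--Sommese that promotes it to nefness is essentially formal.
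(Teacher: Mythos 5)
Your ``upgrade'' step from pseudoeffective to nef is correct: given pseudoeffectivity, write $N^*_{\FF}=P+N$ (Zariski decomposition), use $(N^*_{\FF})^2=0$ from Baum--Bott (no singular residues) to get $P^2=-N^2\geqslant 0$, and if $N\neq 0$ then $P$ is nef and big, forcing $\kappa(N^*_{\FF})=\kappa(P)=2$, which contradicts Bogomolov's $\kappa\leqslant 1$ bound for rank-one saturated subsheaves of $\Omega^1_X$. That part is sound and, as you say, essentially formal.

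The genuine gap is precisely the pseudoeffectivity step, and neither of the two routes you gesture at actually closes it. The ``strengthened Miyaoka'' you invoke is not a theorem: Miyaoka-type generic semipositivity on a non-uniruled variety controls torsion-free \emph{quotients} of $\Omega^1_X$ against movable curve classes (equivalently, \emph{subsheaves} of $T_X$, e.g.\ the Campana--P\u{a}un pseudoeffectivity of $K_{\FF}$), not line subsheaves of $\Omega^1_X$; applied to the quotient $\Omega^1_X/N^*_{\FF}\cong K_{\FF}$ it only gives $(K_X-N^*_{\FF})\cdot\alpha\geqslant 0$, which bounds $N^*_{\FF}\cdot\alpha$ from above, not below. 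The harmonic-current approach is tied to positivity in the \emph{leaf} direction (leafwise Poincar\'e metric, $K_{\FF}$), not the transverse direction $N^*_{\FF}$, so it does not obviously apply either. The argument the statement actually rests on (Brunella, and what the paper re-runs in higher dimension in the proof of Theorem~\ref{terminal}) is a Riemann--Roch computation. From the short exact sequence of bundles for a regular $\FF$ one has, besides $(N^*_{\FF})^2=0$, the Chern-class identity $c_2(X)=K_{\FF}\cdot N^*_{\FF}=K_X\cdot N^*_{\FF}$, and $c_2(X)>0$ for a minimal surface of general type. Hence
$$\chi(X,mN_{\FF})=\tfrac{m^2}{2}(N_{\FF})^2-\tfrac{m}{2}N_{\FF}\cdot K_X+\chi(\OO_X)=\tfrac{m}{2}c_2(X)+\chi(\OO_X)\longrightarrow+\infty.$$
Since $K_X$ is nef and $N_{\FF}\cdot K_X=-c_2(X)<0$, one has $h^0(X,mN_{\FF})=0$ for all $m\geqslant 1$; so by Serre duality $h^0(X,K_X+mN^*_{\FF})=h^2(X,mN_{\FF})\geqslant\chi(X,mN_{\FF})>0$ for $m\gg 0$. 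Thus $\tfrac{1}{m}K_X+N^*_{\FF}$ is $\QQ$-effective for all large $m$, and letting $m\to\infty$ gives pseudoeffectivity of $N^*_{\FF}$. This is the missing ingredient your proposal needs; note that it uses $K_X$ nef (minimality of $X$) and $c_2(X)>0$ (general type), exactly the hypotheses of the statement, and exactly the ``Baum--Bott plus Riemann--Roch plus intersection theory'' recipe the paper alludes to.
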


This theorem essentially follows from Baum--Bott formula, Riemann--Roch theorem and intersection theory. Assuming that $\FF$ is non-singular in codimension $2$, we use the Baum--Bott formula and intersection computations in a similar way to study purely transcendental foliations in higher dimensions. Together with Lefschetz-type theorems, it allows us to ``lift'' positivity of the conormal bundle from hyperplane sections. Then we use classification results of Touzet \cite{Tou13, Tou16} to describe foliations from this class.

\begin{ttheorem}[B] Let $X$ be a smooth projective manifold of general type, $\dim(X) = n \geqslant 2$. Let $\FF$ be a codimension 1 foliation on $X$. Suppose that \begin{enumerate} 
\item $K_{\FF}$ is not big;
\item $\FF$ is purely transcendental;
\item $\mathrm{codim}_X\Sing(\FF) \geqslant 3$.
\end{enumerate}

Then the foliation $\FF$ is induced by a Hilbert modular foliation via a morphism $X \to M_H \simeq \mathbb{D}^N/\Gamma$, generically finite onto its image.
\end{ttheorem}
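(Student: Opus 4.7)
The strategy is to reduce to Brunella's Theorem~\ref{brureg} on a general complete intersection surface, lift the resulting positivity of $N^*_\FF$ from that surface up to $X$, and then invoke Touzet's structure theorems to identify $\FF$ as Hilbert modular. Concretely, I would fix a very ample divisor $H$ on $X$ and take a general complete intersection surface $S = H_1 \cap \cdots \cap H_{n-2}$ with $H_i \in |m_i H|$ for $m_i$ sufficiently large. Since $\dim \Sing(\FF) \leqslant n-3$, a generic such $S$ is smooth and disjoint from $\Sing(\FF)$, so $\FF|_S$ is a regular codimension one foliation on $S$ with $N^*_{\FF|_S} = N^*_\FF|_S$. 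Adjunction gives $K_S = (K_X + H_1 + \cdots + H_{n-2})|_S$, which is big since $K_X$ is big and we add ample classes, so $S$ is of general type; after passing to the minimal model (and checking that the induced foliation remains mild enough for $m_i$ large, for instance by the generic absence of transverse $(-1)$-curves) Theorem~\ref{brureg} gives that $N^*_\FF|_S$ is nef on $S$.

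Next, I would lift this positivity from $S$ to $X$. Curves arising as further complete intersections inside such varying surfaces $S$ form a family whose classes exhaust, in the limit, the strongly movable cone of curves on $X$; since $N^*_\FF \cdot C \geqslant 0$ for each such $C$ by nefness on $S$, the Boucksom--Demailly--Paun--Peternell duality between strongly movable curves and pseudoeffective divisors forces $N^*_\FF$ to be pseudoeffective on $X$. This is the Lefschetz-type descent alluded to in the introduction, and it relies essentially on $\mathrm{codim}_X \Sing(\FF) \geqslant 3$ to guarantee regularity of the surface restriction.

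Finally, I would apply Touzet's classification \cite{Tou13, Tou16} of codimension one foliations with pseudoeffective conormal bundle: up to finite \'etale cover, such a foliation is either algebraically integrable, transversely Euclidean (giving an isotrivial fibration of abelian type with $N^*_\FF$ numerically trivial), or transversely hyperbolic and hence coming from a Hilbert modular foliation via a classifying morphism $X \to M_H$. The assumption that $\FF$ is purely transcendental rules out the algebraically integrable case. The transversely Euclidean case is incompatible with $X$ of general type, since such a foliation would force $X$ to carry up to finite cover a fibration in abelian varieties, contradicting bigness of $K_X$. What remains is the transversely hyperbolic case, producing the desired map $X \to M_H$; generic finiteness onto its image then follows because a positive-dimensional fibre would contribute a positive-dimensional algebraic subvariety tangent to $\FF$, contradicting the purely transcendental hypothesis.

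The main obstacle I anticipate is the lifting step: pseudoeffectivity does not restrict nicely in general, and extracting global pseudoeffectivity of $N^*_\FF$ from nefness on a family of complete intersection surfaces is delicate, likely requiring either a direct BDPP-type duality argument or a careful passage through positive currents supported on the varying $S$. A secondary subtle point is the clean elimination of the transversely Euclidean alternative in Touzet's trichotomy using only the stated hypotheses; the hypothesis that $K_\FF$ is not big should be used here, in combination with the general type of $X$, to rule out the torsion-conormal case.
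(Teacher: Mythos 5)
Your overall outline (restrict to a complete intersection surface, get positivity of the conormal bundle there, lift, and apply Touzet) mirrors the shape of the argument in the paper, but the execution diverges in an important place, and there is a genuine gap in the lifting step.

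The paper does not invoke Theorem~\ref{brureg} as a black box. Instead, it starts from the non-pseudoeffectivity of $N_{\FF}$: since $K_X$ is big and $K_{\FF} = K_X + N_{\FF}$ is not big, $N_{\FF}$ cannot be pseudoeffective, so by BDPP there is a birational model $\varphi\colon \widetilde X \to X$ and ample classes $H_1, \ldots, H_{n-1}$ on $\widetilde X$ with $\varphi^*N_{\FF}\cdot H_1\cdots H_{n-1} < 0$. It then takes a general smooth $D \in |m_2 H_2 \cap \cdots \cap m_{n-1}H_{n-1}|$ and reruns the Riemann--Roch, Baum--Bott and Serre-duality computation \emph{on this specific $D$}. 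The term $(\varphi^*N^*_{\FF})^2 \cdot H_2\cdots H_{n-1}$ vanishes by Baum--Bott (this is exactly where $\mathrm{codim}_X \Sing(\FF)\geqslant 3$ enters, via $N_{\FF}^2\equiv 0$), the cross-term is strictly positive by the choice of $H_1,\ldots,H_{n-1}$, and $h^2$ vanishes by Serre duality for the same reason. This yields pseudoeffectivity of $\varphi^*N^*_{\FF}|_D$. The choice of $D$ dictated by the BDPP witness is what makes the sign analysis work; choosing an arbitrary complete intersection surface and quoting Brunella loses this control.

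The genuine gap in your proposal is the lifting step, and you correctly suspect it. Nefness of $N^*_{\FF}|_S$ for general complete intersection surfaces $S$ of a \emph{fixed} ample class only gives $N^*_{\FF}\cdot A_1\cdots A_{n-1}\geqslant 0$ for ample $A_i$ \emph{on $X$ itself}. That is strictly weaker than pseudoeffectivity: BDPP duality tests against pushforwards $\mu_*(A_1\cdots A_{n-1})$ from \emph{all} birational models $\mu\colon X'\to X$, and you would need to repeat the Brunella step on every such $X'$. But the pullback foliation on $X'$ need not satisfy $\mathrm{codim}\Sing \geqslant 3$ (blow-ups routinely create codimension-two singular locus), so Brunella's regularity hypothesis can fail there. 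The paper sidesteps this entirely: it never lifts positivity of the divisor class directly. Instead, after running the case analysis on the numerical/Kodaira dimension of $N^*_{\widetilde\FF}|_D$ (excluding $\nu=0$ by Zariski decomposition plus Lefschetz on $H^2$, and $\nu=\kappa=1$ by Bogomolov's lemma plus pure transcendence), it lands in the transversely hyperbolic case and lifts the \emph{monodromy representation} from $D$ to $\widetilde X$ using the Lefschetz theorem for quasi-projective varieties of Hamm--L\^e, then builds the map $\widetilde X\to M_H$ via Corlette--Simpson exactly as in Touzet's proof. Pseudoeffectivity of $N^*_{\FF}$ on $X$ is obtained only afterwards, as a corollary of the existence of this map, not as the intermediate target. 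Your elimination of Touzet's remaining alternatives is also looser than the paper's: for the $\nu=0$ case the clean argument is that $N^*_{\FF}|_D\equiv 0$ forces $N^*_{\FF}\equiv 0$ by Lefschetz, hence $K_{\FF}\equiv K_X$ would be big, directly contradicting the hypothesis; invoking abelian fibrations is an unnecessary (and unproved) detour.
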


Our paper is organized as follows. In section \ref{prelim} we gather the information on Kodaira and numerical dimensions for $\QQ$-divisors, basic notions from foliation theory and foliated birational geometry. In section \ref{main1} we recall some important definitions and results concerning fibrations with fibers of general type. Then we use these results to prove Theorem (A), see Proposition \ref{AlgInt} and Theorem \ref{MainThm} below. In section \ref{main2} we prove Theorem (B) (see Theorem \ref{terminal}) and pose some questions for future research.

\subsection*{Acknowledgements} This paper is a part of a project on hyperbolicity and foliations we have been working on for a long time. We would like to thank our advisor Constantin Shramov for encouraging us to write down these results. We also thank Jorge Vit\'orio Pereira, Erwan Rousseau, Jean-Pierre Demailly, Misha Verbitsky and Vladimir Lazi\'c for very helpful discussions on various topics related to this work. We also thank the referee for very careful reading of this paper and numerous valuable comments.

\section{Preliminaries} \label{prelim}

\subsection{Kodaira and numerical dimensions} In this subsection we recall the notions of Kodaira and numerical dimension for $\QQ$-divisors. 

\begin{definition} Let $D$ be a $\QQ$-divisor on a normal projective variety $X$. The Kodaira dimension (or Kodaira--Iitaka dimension) of $D$ is defined as $$\kappa(X, D) = \max\{k \mid \limsup_{m \to \infty}\frac{h^0(X, \OO_X(\lfloor mD\rfloor)}{m^k} > 0\}$$ if $h^0(X, \OO_X(\lfloor mD\rfloor)) > 0$ for some $m$ and $\kappa(X, D) = -\infty$ otherwise.
\end{definition}

\begin{definition} \label{num} Let $X$ be a normal projective variety, $D$ an $\QQ$-divisor on $X$ and $A$ an ample divisor. We define the quantity $$\nu(D, A) = \max\left\{k \in \mathbb{Z}_{\geqslant 0} \mid \limsup_{m \to \infty}\frac{h^0(X, \OO_X(\lfloor mD\rfloor + A))}{m^k} > 0\right\}.$$ The numerical dimension of $D$ is defined to be $$\nu(D) = \max\{\nu(D, A) \mid \mbox{$A$ ample}\}.$$ If $D$ is not pseudoeffective then we set $\nu(D) = -\infty$.
\end{definition}

The main property of $\nu(D)$ is that it depends only on the numerical class of a divisor. 

\begin{proposition} \label{Numprop} We have the following basic properties of Kodaira and numerical dimensions (see \cite[Lemma II.3.11, Proposition III.5.7]{Nak04} and \cite[Sec. V]{Nak04} for proofs).
\begin{enumerate}
\item We have $\kappa(X, D) \leqslant \nu(X, D) \leqslant n$ for any $\QQ$-divisor $D$;
\item For a nef $\QQ$-divisor $D$ we have $\nu(X, D) = \max\{k \mid D^k \cdot A^{n-k} \neq 0\}$
\item If $D$ and $E$ are pseudoeffective $\QQ$-divisors then $\nu(D + E) \geqslant \max\{\nu(D), \nu(E)\};$ 
\item If $f \colon Y \to X$ is a surjective morphism then $\kappa(Y, f^*D) = \kappa(X, D)$ and $\nu(Y, f^*D) = \nu(X, D)$.
\item If $\varphi \colon \widetilde X \to X$ is a birational morphism and $D$ is a $\QQ$-divisor on $X$ then $$\kappa(X, D) = \kappa(\widetilde X, \varphi^*D + E) \quad \mbox{and} \quad \nu(X, D) = \nu(\widetilde X, \varphi^*D + E)$$ for any effective $\varphi$-exceptional $\QQ$-divisor $E$.
\end{enumerate}
\end{proposition}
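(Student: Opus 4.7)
The plan is not to produce new proofs of these standard statements, but to sketch how each reduces to well-known machinery: the definitions themselves, asymptotic Riemann--Roch, Kawamata--Viehweg vanishing, and the functorial behaviour of sections under pullback and pushforward. Complete arguments are in Nakayama's book, and we would simply cite them in the final version.

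For (1), the inequality $\kappa \leqslant \nu$ is immediate because twisting by an effective (in particular ample) divisor can only increase $h^0$, so every growth exponent realized by $h^0(mD)$ is also realized by $h^0(mD+A)$; the bound $\nu \leqslant n$ follows from $h^0(X, L) = O(m^n)$ on an $n$-dimensional projective variety. Item (2) uses Kawamata--Viehweg vanishing: for nef $D$ and ample $A$ one has $H^i(X, \OO_X(mD+A)) = 0$ for $i > 0$ and $m \gg 0$, whence $h^0 = \chi$ is a polynomial in $m$ whose degree equals $\max\{k : D^k \cdot A^{n-k} \neq 0\}$ by the Riemann--Roch expansion. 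For (3), one picks an ample $A$ witnessing $\nu(D)$ and uses pseudoeffectivity of $E$ to arrange that $mE + A'$ is effective for a suitable ample $A'$ and large $m$; multiplying sections then transports the growth of $h^0(mD+A)$ to $h^0(m(D+E) + A + A')$, so $\nu(D+E) \geqslant \nu(D)$, and similarly with $D$ and $E$ exchanged.

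Item (4) for $\kappa$ follows from injectivity of $H^0(X, mD) \hookrightarrow H^0(Y, f^*mD)$ combined with the projection formula $f_*\OO_Y(mf^*D) = \OO_X(mD) \otimes f_*\OO_Y$, where $f_*\OO_Y$ is coherent and dominated by an ample twist, giving equality of growth rates; the same mechanism handles $\nu$ once an ample divisor is carried along. Item (5) is obtained by noting that $\varphi_* \OO_{\widetilde X}(mE) = \OO_X$ for effective $\varphi$-exceptional $E$, so pushforward identifies the relevant spaces of sections on $X$ and $\widetilde X$, yielding equality of both invariants. The main conceptual subtlety lies in (3): the abstract definition of $\nu$ for merely pseudoeffective classes requires a careful choice of approximating effective divisors and ample twists, and Nakayama's treatment via $\sigma$-decomposition provides the cleanest route, which is what we would follow.
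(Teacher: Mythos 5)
The paper itself offers no proof and simply points to Nakayama's book (Lemma~II.3.11, Proposition~III.5.7, and Section~V), which is precisely your stated plan; your supplementary sketches of items (1)--(5) line up with the standard arguments there, and you correctly flag item (3) as the one place where the careful $\sigma$-decomposition formalism is really needed.
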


We also recall the notion of \emph{movable intersection product} for pseudoeffective classes from \cite{BDPP13} (see also \cite[Sec. 4]{Leh13}).

\begin{theorem} \label{posprod} Let $X$ be a smooth projective variety of dimension $n$. Denote by $\mathcal{E}$ the cone of pseudoeffective $(1,1)$-classes on $X$. Then for every $k \in \{1, \ldots, n\}$ there exists a map $$\prod_{i=1}^k\mathcal{E}_i \to H^{k,k}_{\geqslant 0}(X, \mathbb{R}), \quad (L_1, L_2, \ldots, L_k) \to \langle L_1 \cdot L_2 \cdots L_k\rangle$$ called the movable intersection product, such that the following properties hold.
\begin{enumerate}
\item We have $\mathrm{vol}(L) = \langle L\rangle^n$;
\item The movable intersection product is increasing. homogeneous of degree $1$ and superadditive in each variable: $$\langle L_1 \cdots (M + N) \cdots L_k\rangle \geqslant \langle L_1 \cdots M\cdots L_k\rangle + \langle L_1 \cdots N\cdots L_k\rangle.$$
\item For $k=1$ the movable intersection product gives a divisorial Zariski decomposition: $$L = P(L) + N(L),$$ where $P(L) = \langle L\rangle$ is nef in codimension $1$.
\end{enumerate}
\end{theorem}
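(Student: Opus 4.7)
The plan is to construct the movable intersection product first for big classes via Fujita approximations, then extend to all pseudoeffective classes by continuity, and finally verify the three listed properties; this follows the strategy of Boucksom's thesis used in \cite{BDPP13}.

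For a big class $\alpha \in H^{1,1}(X,\mathbb{R})$, Demailly's regularization theorem for closed positive $(1,1)$-currents with analytic singularities produces \emph{Fujita approximations}: a birational modification $\mu\colon \tilde X \to X$ and a decomposition $\mu^*\alpha = \beta + [E]$ with $\beta$ a K\"ahler class and $E$ an effective $\mathbb{R}$-divisor, and one can arrange $\beta^n$ to be arbitrarily close to $\mathrm{vol}(\alpha)$. Given big classes $\alpha_1, \ldots, \alpha_k$, I would define
\[
\langle \alpha_1 \cdot \alpha_2 \cdots \alpha_k\rangle := \sup_{\mu, \{\beta_i\}} \mu_*(\beta_1 \cdot \beta_2 \cdots \beta_k),
\]
where the supremum runs over all simultaneous Fujita approximations $\mu^*\alpha_i = \beta_i + [E_i]$ on a common modification, viewed inside the pseudoeffective cone of $H^{k,k}_{\geqslant 0}(X,\mathbb{R})$. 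The set of such modifications is directed (any two are dominated by a third via resolution of indeterminacy), and monotonicity of intersection numbers of nef classes on birational models ensures that this supremum is realized by a well-defined class.

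To extend to arbitrary pseudoeffective classes, fix any ample class $\omega$ and set
\[
\langle \alpha_1 \cdots \alpha_k\rangle := \lim_{\epsilon \to 0^+} \langle (\alpha_1 + \epsilon\omega) \cdot (\alpha_2 + \epsilon\omega)\cdots(\alpha_k + \epsilon\omega)\rangle.
\]
Each perturbation $\alpha_i + \epsilon \omega$ is big, and monotonicity in $\epsilon$ makes the limit exist and be independent of the choice of $\omega$. Properties (1) and (2) then follow: (1) is the Fujita approximation theorem $\mathrm{vol}(L) = \sup_\mu \beta_\mu^n$; homogeneity in (2) is immediate from rescaling approximations, and superadditivity $\langle \cdots(M+N)\cdots\rangle \geqslant \langle \cdots M\cdots \rangle + \langle \cdots N\cdots\rangle$ follows from summing pulled-back nef parts on a common modification dominating Fujita approximations for both $M$ and $N$. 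For (3), I would introduce the divisorial asymptotic multiplicities $\nu(L, D)$ along prime divisors $D$, define $N_L := \sum_D \nu(L,D)\cdot D$ as the negative part, and set $P_L := L - N_L$; an explicit identification shows $P_L = \langle L\rangle$ and that $P_L$ is nef in codimension $1$.

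The main technical obstacle is showing that the supremum in the definition is represented by a genuine cohomology class in $H^{k,k}_{\geqslant 0}(X,\mathbb{R})$, and that in part (3) the sum $\sum_D \nu(L,D)\cdot D$ is actually finite so that $N_L$ is an effective $\mathbb{R}$-divisor. Both rest on Demailly's regularization of currents with careful control of the singular part, together with the finiteness theorem stating that a pseudoeffective class has strictly positive asymptotic multiplicity along only finitely many prime divisors—this finiteness is essentially equivalent to the existence of the divisorial Zariski decomposition and requires a non-trivial argument passing between algebraic and analytic formulations of multiplier ideal type invariants.
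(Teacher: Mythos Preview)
The paper does not prove this statement at all: Theorem~\ref{posprod} is stated without proof as a citation of \cite{BDPP13} (with a pointer also to \cite[Sec.~4]{Leh13}), so there is no ``paper's own proof'' to compare against. Your sketch is a faithful outline of the construction in \cite{BDPP13} and Boucksom's thesis---Fujita approximation on common modifications, extension by perturbation with an ample class, and identification of $\langle L\rangle$ with the positive part of the divisorial Zariski decomposition---which is exactly the source the paper is quoting.
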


The following proposition, proved in \cite[Proposition 4.13]{Leh13}, shows that the movable intersection product depends only on positive parts in the divisorial Zariski decompositions.

\begin{proposition} \label{lehmann} Let $X$ be a smooth projective variety and $L_1, \ldots, L_k$ pseudoeffective classes on $X$. Then we have an equality $$\langle L_1 \cdot \cdots \cdot L_k\rangle = \langle P(L_1) \cdot \cdots \cdot P(L_k)\rangle,$$ where $P(L_i)$ are positive parts in the divisorial Zariski decompositions of $L_i$.
\end{proposition}

\begin{remark} This product was used in \cite[Definition 3.6]{BDPP13} to define another version of numerical dimension for a pseudoeffective class $D$:
$$\nu_{BDPP}(D) = \max\{k \mid \langle D\rangle^k \neq 0 \}.$$ The equivalence of this definition to Definition \ref{num} was claimed in \cite{Leh13}, see the subsequent corrections in \cite{E16, Les19}. Still, by the results in \cite[Sec. 6]{Leh13} we have an inequality $$\nu_{BDPP}(D) \leqslant \nu(D)$$ for any pseudoeffective $\QQ$-divisor $D$.
\end{remark}

\subsection{Foliations} In this subsection we recollect basic definitions and facts from foliation theory. General references for this subsection are e.g. \cite{AD13, Bru15}. We denote by $X, Y$ normal and $\mathbb{Q}$-factorial projective varieties over $\mathbb{C}$ and by $\FF$ (resp. $\GG$) coherent sheaves of $\OO_X$-modules (resp. $\OO_Y$-modules). If $\mathscr{E}$ is a coherent torsion-free sheaf of rank $r$ then by $\det(\mathscr{E})$ we denote the reflexive hull $(\bigwedge^{r}\mathscr{E})^{**}$. A subset $U \subset X$ is called \emph{big} if the codimension of $U$ in $X$ is at least $2$.

\begin{definition} A {\it foliation} on a variety $X$ is a coherent subsheaf $\FF \subset T_X$ which is \begin{itemize} \item saturated, that is, the quotient $T_X / \FF$ is torsion-free; \item closed under the Lie bracket, i. e. the map $ \bigwedge^2 \FF \to T_X /\FF$ is zero.\end{itemize} The rank of the foliation, denoted by $\rk(\FF)$, is the rank of the sheaf $\FF$ at a general point of $X$ and the codimension of $\FF$ is $q = \dim(X) - \rk(\FF)$.
\end{definition}

A foliation on a smooth variety $X$ is {\it regular} if $\FF$ is a subbundle of $T_X$ at every point of $X$. In general, let $X^{\reg}$ be the maximal open subset of $X$ such that $\FF|_{X^{\reg}}$ is regular. Then $X^{\reg}$ is a big subset of $X$; the complement $X \setminus X^{\reg}$ is denoted by $\Sing(\FF)$ and is a closed subscheme of codimension at least $2$ (this follows from the fact that $\FF$ is saturated). 

\begin{remark} The {\it normal sheaf} to $\FF$ is defined to be $N_{\FF} = (T_X/\FF)^{**}$. Taking the $q$-th wedge power of the map $N^{*}_{\FF} \hookrightarrow \Omega_X^1$ gives rise to a $q$-form $\omega_{\FF} \in H^0(X, \Omega^1_X \otimes \det N_{\FF})$ with zero locus of codimension at least two. This twisted $q$-form is locally decomposable and integrable, which means that locally in the Euclidean topology around a general point of $X$ we have $$\omega_{\FF} = \omega_1 \wedge \omega_2 \wedge \cdots \wedge \omega_q$$ for locally defined $1$-forms $\omega_1, \ldots, \omega_q$ satisfying the integrability condition $d\omega_i \wedge \omega_{\FF} = 0$. Conversely, any locally decomposable and integrable twisted $q$-form $\omega \in H^0(X, \Omega^q_X \otimes \mathscr{L})$ defines a codimension $q$ foliation. The subsheaf $\FF \subsetneq T_X$ is obtained as the kernel of the morphism $c_{\omega} \colon T_X \to \Omega^{q-1}_X \otimes \mathscr{L}$ given by contraction with $\omega$.
\end{remark}

\begin{definition}[Tangent and transverse subvarieties] Let $W$ be an irreducible and reduced subvariety of $X$ not contained in $\Sing(\FF)$. We say that $W$ is {\it tangent} to $\FF$ if tangent space $T_W$ factors through $\FF$ at all points $x \in X^{\reg}$. A subvariety is {\it transverse} to $\FF$ if it is not tangent to $\FF$. If $\FF$ factors through the tangent space of $W$ then $W$ is called {\it invariant} by $\FF$. In the special case of a codimension $1$ foliation we call a hypersurface tangent to $\FF$ an {\it invariant hypersurface}.
\end{definition}

\begin{definition}\label{pullback} We now define the following constructions for a foliation $\FF$: \begin{itemize} \item{(Pullback)} Let $f \colon Y \dasharrow X$ be a dominant rational map of varieties restricting to a morphism $f^{\circ} \colon Y^{\circ} \to X^{\circ}$ where $Y^{\circ} \subset Y$ and $X^{\circ} \subset X$ are Zariski open subsets. Let $\FF$ be a foliation on $X$ given by a $q$-form $\omega_{\FF} \in H^0(X, \Omega^q_X \otimes \det N_{\FF})$. Then we have an induced $q$-form $$\omega_{Y^{\circ}} \in H^0(Y^{\circ}, \Omega_{Y^{\circ}}^q \otimes (f^{\circ})^*(\det N_{\FF}|_{X^{\circ}}))$$ which defines a foliation $\GG$ on $Y^{\circ}$. We define the {\it pulled-back} foliation $f^{-1}\FF$ to be the saturation of $\GG$ in $T_Y$.

\item{(Projection)} Let $f \colon Y \to X$ be a dominant morphism and let $\GG$ be a foliation on $Y$. We have an induced map $df \colon T_Y \to f^*T_X$. The foliation $\GG$ is called {\it projectable} under $f$ if for a general point $x \in X$ the image $df_{y}(\GG_y)$ does not depend on the choice of $y \in f^{-1}(x)$ and $\dim(df_{y}(\GG_y)) = r = \rk(\GG)$. In particular, if $f \colon Y \to X$ is a birational contraction then any foliation $\GG \subsetneq T_Y$ is projectable under $f$ and induces a foliation $\FF = f_*\GG$ of the same rank on $X$.

\item{(Restriction)} Let $Z \subseteq X$ be a smooth subvariety transverse to a foliation $\FF$ given by a twisted $q$-form $\omega_{\FF} \in H^0(X, \Omega^q_X \otimes \det N_{\FF})$. Suppose that the restriction of $\omega_{\FF}$ to $Z$ is nonzero. Then we obtain a nonzero induced $q$-form $\omega_Z \in H^0(Z, \Omega^q_Z \otimes \det N_{\FF}|_Z)$. Let $B$ be the maximal effective divisor on $Z$ such that $$\omega_Z \in H^0(Z, \Omega^q_Z \otimes \det N_{\FF}|_Z(-B)).$$ This $q$-form defines a codimension $q$ foliation on $Z$ which is called {\em the restriction of $\FF$ to $Z$} and denoted $\FF_Z$. 
\end{itemize}
\end{definition}

\begin{definition} Let $\FF$ be a foliation on a smooth variety $X$. Since the integrability condition holds for $\FF$, the Frobenius theorem implies that for every point $x \in X^{\reg}$ there exists an open neighbourhood $U = U_x$ and a submersion $p \colon U \to V$ such that $\FF|_U = T_{U/V}$. A {\it leaf} of $\FF$ is a maximal connected, locally closed submanifold $L \subset X^{\reg}$ such that $\FF|_L = T_L$. A leaf $L$ is {\it algebraic} if it is open in its Zariski closure or, equivalently, if $\dim(L) = \dim({\overline L}^{\Zar})$. In this case we use the word ``leaf'' for the Zariski closure of a leaf as well.
\end{definition}

\begin{definition}[Algebraic integrability] A foliation $\FF$ on $X$ is called {\it algebraically integrable} or simply {\it algebraic} if the leaf of $\FF$ through a general point of $X$ is algebraic. An example of an algebraically integrable foliation is given by the relative tangent sheaf $\FF = (T_{X/Y})$ of a fibration $f \colon X \to Y$ where $\dim(Y) < \dim(X)$. The leaves of $\FF$ in this case are just the fibers of $f$. We say that $\FF$ is {\it induced by the fibration} $f$. 
\end{definition}

\begin{proposition}[Rational first integrals] \label{ratint} Let $\FF$ be an algebraically integrable foliation on $X$. Then there is a unique irreducible subvariety $W$ of the cycle space $\mathrm{Chow}(X)$ parameterizing the closure of a general leaf of $\FF$. Let $V \subset W \times X$ be the universal cycle with universal morphisms $\pi \colon V \to W$ and $e \colon V \to X$. Then the morphism $e$ is birational and for $w \in W$ general $e(\pi^{-1}(w)) \subset X$ is the closure of a leaf of $\FF$. The normalization $\widetilde{W}$ of $W$ is called the {\it space of leaves} of $\FF$ and the induced rational map $X \dasharrow \widetilde{W}$ is called {\it rational first integral} of $\FF$. In other words, an algebraically integrable foliation is induced by a fibration on a suitable birational model of $X$.
\end{proposition}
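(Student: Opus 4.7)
The plan is to construct $W$ directly as the Zariski closure in $\mathrm{Chow}(X)$ of the locus parameterizing closures of algebraic leaves, and then to deduce all remaining assertions from general properties of universal families of cycles.

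First I would verify that the set-theoretic assignment $x \mapsto [\overline{L_x}^{\Zar}]$ defines a morphism from a dense Zariski open $U \subset X^{\reg}$ into $\mathrm{Chow}(X)$. Since $\FF$ is algebraically integrable, for general $x \in U$ the leaf $L_x$ is open in its Zariski closure $\overline{L_x}^{\Zar}$, an irreducible $r$-dimensional subvariety with $r = \rk(\FF)$. The countably many components of $\mathrm{Chow}(X)$, combined with irreducibility of $U$, force the images to lie in a single irreducible component; let $W$ be the Zariski closure of this image. By construction $W$ is irreducible and its general point corresponds to a leaf closure, which settles existence.

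Next I would study the universal family $V \subset W \times X$ with its projections $\pi \colon V \to W$ and $e \colon V \to X$. For general $w \in W$ the map $e$ sends $\pi^{-1}(w)$ birationally onto the cycle in $X$ parameterized by $w$, so this cycle is the closure of a leaf. Surjectivity of $e$ is clear because leaf closures cover a dense open subset of $X$. For birationality, through a general point $x \in X^{\reg}$ there passes exactly one leaf (by maximality of integral submanifolds), hence $e$ is generically one-to-one. Uniqueness of $W$ is then immediate: any other irreducible $W' \subset \mathrm{Chow}(X)$ whose general member is a leaf closure would share a dense open subset with $W$ via its own universal cycle, forcing $W' = W$. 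Composing the rational inverse $e^{-1} \colon X \dasharrow V$ with $\pi$ yields a dominant rational map $X \dasharrow W$; by the universal property of the normalization it lifts to the rational first integral $X \dasharrow \widetilde{W}$, and resolving its indeterminacy via a birational morphism $\mu \colon \widehat{X} \to X$ produces a genuine fibration $\widehat{X} \to \widetilde{W}$ with $\mu^{-1}\FF$ equal to the saturation of its relative tangent sheaf.

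The main technical obstacle I expect is the very first step: promoting the pointwise assignment $x \mapsto [\overline{L_x}^{\Zar}]$ to an algebraic morphism onto a subvariety of $\mathrm{Chow}(X)$. This can be handled either via representability of the Chow functor on bounded families (the requisite boundedness coming from the fact that the incidence locus in $X \times \mathrm{Chow}(X)$ is cut out by the integrable distribution $\FF$), or by a direct construction using the foliation itself as in \cite{AD13}; once this is in place, the remainder of the argument is routine universal-family bookkeeping.
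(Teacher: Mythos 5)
The paper states this proposition as a recalled standard fact in the Preliminaries (pointing the reader to references such as \cite{AD13, Bru15}) and gives no proof of its own, so there is no in-paper argument to compare against. Your sketch follows the standard route one finds in the literature, and the overall architecture is sound: build $W$ as the closure in $\mathrm{Chow}(X)$ of the family of leaf closures, pull in the universal cycle $V \subset W \times X$ with its two projections, get birationality of $e$ from the fact that a general point of $X^{\reg}$ lies on a unique leaf, deduce uniqueness of $W$ from generic injectivity of $e$, and pass to the normalization $\widetilde W$ to produce the rational first integral.

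The one genuine gap is exactly the step you flag yourself and then defer: promoting the pointwise assignment $x \mapsto [\overline{L_x}^{\Zar}]$ to an algebraic (or at least meromorphic, hence algebraic by projectivity of $X$) map from a dense open $U \subset X^{\reg}$ to $\mathrm{Chow}(X)$. Until that is established you do not have a morphism at all, and the phrase that ``countably many components of $\mathrm{Chow}(X)$, combined with irreducibility of $U$, force the images to lie in a single component'' is only meaningful after this is done; a purely set-theoretic assignment of a cycle to each point cannot be argued about by irreducibility of $U$. The standard way to close the gap is via Barlet's cycle space / the Douady space: local triviality of $\FF$ on $X^{\reg}$ gives a holomorphic family of leaf germs, properness of $X$ plus algebraic integrability lets one show the closures vary in a bounded, analytic family, and since $X$ is projective the resulting analytic subset of the cycle space is a closed algebraic subvariety of $\mathrm{Chow}(X)$. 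This is the non-trivial content of the proposition, and a complete proof must carry it out rather than reference it; your two suggested escape routes (representability/boundedness of the Chow functor, or the construction in \cite{AD13}) are each workable, but as written they are placeholders. Everything downstream of that step in your argument (surjectivity and birationality of $e$ via properness plus generic injectivity, uniqueness, descent to $\widetilde W$, resolving indeterminacy to get an honest fibration) is routine and correct.
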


\begin{proposition}[Algebraic and purely transcendental parts] \label{algred} Let $\FF$ be a foliation on $X$. Then there exists a normal variety $Y$ together with a dominant rational map $\varphi \colon X \dasharrow Y$ with connected fibers and a foliation $\GG$ on $Y$ such that \begin{itemize} \item The foliation $\GG$ is {\it purely transcendental}, that is, there is no positive-dimensional subvariety tangent to $\GG$ through a general point of $Y$; \item We have $\FF = \varphi^{-1}\GG$.\end{itemize} The pair $(Y, \GG)$ is unique up to birational equivalence. The foliation induced by $\varphi$ is called the {\it algebraic part} of $\FF$ and denoted by $\FF^{\alg}$. The rank of $\FF^{\alg}$ is called the {\it algebraic rank} of $\FF$; by construction, it is a birational invariant.
\end{proposition}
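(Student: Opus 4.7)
The plan is to construct $\varphi : X \dasharrow Y$ so that its general fiber is the maximal connected algebraic subvariety of $X$ tangent to $\FF$ through a general point, and then define $\GG$ as the foliation on $Y$ obtained by descending $\FF$ along $\varphi$.

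To carry this out, I would work inside the Chow scheme $\mathrm{Chow}(X)$ and consider its subscheme $\mathcal{T}$ parameterizing reduced irreducible subvarieties $Z \subset X$ tangent to $\FF$ (in the sense that $T_zZ \subseteq \FF_z$ at a general smooth point $z \in Z \cap X^{\reg}$). Although $\mathcal{T}$ has countably many irreducible components $\{T_\alpha\}$, only those whose universal family $V_\alpha \to T_\alpha$ evaluates dominantly onto $X$ via $e_\alpha : V_\alpha \to X$ are relevant. Among such dominant components I would select the one (or rather the appropriate join thereof) of maximal relative dimension $d := \dim V_\alpha - \dim T_\alpha$ over $X$; call it $T_0$. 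By a standard dimension count, for very general $x \in X$ the union of all irreducible algebraic subvarieties of $X$ tangent to $\FF$ and passing through $x$ coincides, up to finitely many components, with the fibers of $e_0$ over $x$; otherwise one could glue two families into a larger dominant family tangent to $\FF$, contradicting the maximality of $d$. Passing to the Stein factorization then yields a dominant rational map $\varphi : X \dasharrow Y$ with connected general fibers of dimension $d$, each tangent to $\FF$.

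Let $\FF^{\alg}$ denote the algebraically integrable foliation induced by $\varphi$, so that $\FF^{\alg} \subseteq \FF$ by construction. Using Frobenius integrability of $\FF$ together with the fact that $\FF^{\alg}$ is the relative tangent foliation of $\varphi$, one checks that $\FF$ is projectable under $\varphi$ on a dense open subset (the image $d\varphi(\FF)$ is independent of the choice of point in a general fiber); this produces a saturated involutive subsheaf $\GG \subseteq T_Y$ with $\FF = \varphi^{-1}\GG$ in the sense of Remark \ref{pullback}. Pure transcendence of $\GG$ is then immediate: if a positive-dimensional algebraic subvariety $W \subset Y$ were tangent to $\GG$ through a general point of $Y$, then $\varphi^{-1}(W) \subset X$ would be an algebraic subvariety tangent to $\FF$, strictly larger in dimension than the general fiber of $\varphi$, contradicting the maximality of $d$. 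The uniqueness of $(Y, \GG)$ up to birational equivalence follows from the same maximality argument applied to any alternative pair $(Y', \GG')$: both rational fibrations must partition $X$ into the same maximal algebraic subvarieties tangent to $\FF$.

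The principal difficulty is the rigorous extraction of a single dominant family of maximal tangent subvarieties. Since $\mathrm{Chow}(X)$ decomposes into countably many components, one must exploit the integrability of $\FF$ (which ensures that local analytic leaves propagate coherently) together with a dimension-theoretic argument to guarantee that through a very general point $x \in X$ there is a well-defined maximal algebraic tangent subvariety, rather than an ever-growing chain drawn from different components. This step is standard in the foliated literature (Bonnet, and in a more recent form Araujo--Druel), where it is handled via an iterative procedure using the closure of $\mathcal{T}$ under taking irreducible components of intersections and of closures of unions of families of tangent subvarieties.
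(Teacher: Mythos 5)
The paper does not prove this proposition: it is recalled as a known fact, with \cite{AD13} and \cite{Bru15} given as the general references for the subsection, so there is no argument in the paper against which to measure yours. Your outline is essentially the standard construction of the algebraic reduction and is sound in its broad strokes: exhibit a dominant family of $\FF$-tangent subvarieties of maximal relative dimension inside $\mathrm{Chow}(X)$, show that through a very general point there is a unique maximal such subvariety, take the induced almost-holomorphic fibration $\varphi$, and descend $\FF$ to a foliation $\GG$ on the base using involutivity. You are also right to single out uniqueness of the maximal tangent subvariety as the delicate step.

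Two places in your sketch deserve sharpening. First, the ``gluing'' argument should make the role of integrability explicit: a chain $Z_1 \cup Z_2$ of two $\FF$-tangent subvarieties through a general point $x$ is not itself irreducible, and the reason one may enlarge the family is that both $Z_1$ and $Z_2$ lie in the germ of the analytic leaf of $\FF$ through $x$, so the Zariski closure of the chain-connected class remains tangent to $\FF$; without integrability this fails. Second, a bare Stein factorization does not produce $\varphi$ when $e_0 \colon V_0 \to X$ is generically finite of degree $> 1$: what is actually needed is the chain-connected quotient in the sense of Campana and Koll\'ar--Miyaoka--Mori, which is exactly the iterative procedure you allude to at the end. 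With those two points made precise, your argument reproduces the standard proof that the paper defers to.
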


\begin{proposition}[Baum--Bott formula, \cite{BP06}] \label{BaumBott} Let $\FF$ be a codimension $1$ foliation on a complex manifold $X$ of dimension at least $2$. We have the following equality in $H^4(X, \mathbb{C})$: $$c^2_1(N_{\FF}) = \sum_{Y}BB(\FF, Y)[Y],$$ where $Y$ ranges over irreducible components of $\mathrm{Sing}(\FF)$ of codimension $2$, and $BB(\FF, [Y])$ is a number, called the Baum--Bott index of $\FF$ at $Y$.
\end{proposition}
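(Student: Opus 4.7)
The plan is to follow the classical Chern--Weil/Bott vanishing/localization strategy. First I would construct the \emph{Bott partial connection} on $N_{\FF}$ over the open set $X^{\reg} = X \setminus \Sing(\FF)$. For a local section $s$ of $N_{\FF}$ and a local vector field $v$ tangent to $\FF$, this partial connection is defined by
$$\nabla_v s = \pi([v, \tilde{s}]),$$
where $\tilde{s}$ is any lift of $s$ to $T_X$ and $\pi \colon T_X \to N_{\FF}$ is the projection. Integrability of $\FF$ ensures that this expression is independent of the lift and $\OO_X$-linear in $v$. One then extends this partial connection smoothly to a full connection on $N_{\FF}|_{X^{\reg}}$.

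The key classical input is \emph{Bott's vanishing theorem}: the curvature $\Theta$ of such a connection satisfies $\iota_v \Theta = 0$ for every $v$ tangent to $\FF$, so any Chern polynomial of degree strictly greater than the codimension of $\FF$ vanishes as a differential form on $X^{\reg}$. In our codimension one setting, this gives $c_1^2(N_{\FF})|_{X^{\reg}} = 0$ in $H^4(X^{\reg}, \mathbb{C})$.

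The next step is localization. I would invoke the long exact sequence in cohomology with supports,
$$\cdots \to H^4_{\Sing(\FF)}(X, \mathbb{C}) \to H^4(X, \mathbb{C}) \to H^4(X^{\reg}, \mathbb{C}) \to \cdots.$$
Since $c_1^2(N_{\FF})$ restricts to zero on $X^{\reg}$, it comes from a class supported on $\Sing(\FF)$. Irreducible components of $\Sing(\FF)$ of codimension $\geqslant 3$ cannot contribute to $H^4$ for dimensional reasons, so only codimension $2$ components $Y$ contribute, and near a smooth point of $Y$ the local cohomology $H^4_Y(X, \mathbb{C})$ is spanned by the Thom class $[Y]$. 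This gives a decomposition
$$c_1^2(N_{\FF}) = \sum_Y BB(\FF, Y)[Y]$$
for unique scalars $BB(\FF, Y)$.

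Finally, the coefficients $BB(\FF, Y)$ would be identified with local residues: near a general point of $Y$, choosing a local defining $1$-form $\omega$ for $\FF$, one extracts $BB(\FF, Y)$ as a Grothendieck residue of an explicit expression built from $\omega$ and $d\omega$ (and from the curvature representative in Chern--Weil form). The main obstacle is this localization step: upgrading the de Rham vanishing of $c_1^2(N_{\FF})$ on $X^{\reg}$ into an explicit cycle representative supported on codimension $2$ components requires a careful \v{C}ech--de Rham argument with partitions of unity (or, equivalently, a current-theoretic argument) of the type carried out in Baum--Bott's original paper and in its sheaf-theoretic refinement in \cite{BP06}.
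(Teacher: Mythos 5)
The paper states this proposition as a cited result from Brunella--Perrone \cite{BP06}, which in turn goes back to the original Baum--Bott localization theorem; no proof is given in the text. Your sketch is a correct outline of the argument carried out in that literature: the Bott partial connection on $N_{\FF}$ over $X^{\reg}$, Bott's vanishing theorem giving $c_1^2(N_{\FF})|_{X^{\reg}} = 0$ (degree $2 > \mathrm{codim}\,\FF = 1$), localization via the long exact sequence in cohomology with supports together with the dimension count $H^4_Z(X,\mathbb{C}) = 0$ for $\mathrm{codim}_X Z \geqslant 3$, and finally the Grothendieck-residue identification of the coefficients $BB(\FF,Y)$; you are also right that the residue/\v{C}ech--de Rham step is where the genuine technical work lies. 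It is worth noting that the present paper only ever invokes the formula in the special case $\mathrm{codim}_X\Sing(\FF) \geqslant 3$, where the right-hand side is the empty sum and only the Bott-vanishing-plus-localization half of your argument is actually used.
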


\subsection{Canonical class and singularities of foliations}

\begin{definition}By the {\it canonical class} of a foliation $\FF$ we denote a linear equivalence class of Weil divisors $K_{\FF}$ such that $\OO_X(K_{\FF}) \cong \det(\FF)^*$. We have the relation $$ K_X = K_{\FF} + \det(N^*_{\FF}).$$
\end{definition}

\begin{definition} Let $f \colon Y \to X$ be a birational morphism and let $\FF$ be a foliation on $X$. Then we have an induced foliation $\widetilde\FF = f^{-1}\FF$ on $Y$. We express the canonical divisor $K_{\widetilde\FF}$ as $$ K_{\widetilde\FF} = f^*K_{\FF} + \sum_ia(E_i, \FF, X)E_i$$ where the sum is over all prime $f$-exceptional divisors on $Y$. The foliation $\FF$ is said to have {\it canonical (resp., terminal) singularities} if all discrepancies $a(E_i, \FF, X)$ are nonnegative (resp., positive) for any such birational morphism. 
\end{definition}

\begin{remark} By property (5) in Proposition \ref{Numprop}, if $\FF$ is a foliation with canonical singularities and $\varphi \colon (\widetilde{X}, \widetilde{\FF}) \to (X, \FF)$ is a birational morphism, then $$\kappa(K_{\FF}) = \kappa(K_{\widetilde\FF}) \quad \mbox{and} \quad \nu(K_{\FF}) = \nu(K_{\widetilde\FF}).$$ Thus, the class of foliations with canonical singularities is natural to consider in birational geometry. A famous theorem of Seidenberg \cite{Sei68} says that any foliation singularity on a smooth surface can be transformed to a \emph{reduced} singularity by a finite number of blow-ups. For codimension one foliations in dimension $3$ there is a resolution theorem of Cano \cite{Can04}, in terms of so-called \emph{simple} foliation singularities. Both reduced and simple foliation singularities are canonical (see \cite[Fact I.2.4]{McQ08} and \cite[Lemma 2.9]{CS21}, respectively).
\end{remark}

\begin{remark} Unlike canonical ones, terminal foliation singularities form a rather restricted class. A result of McQuillan \cite[Corollary I.2.2]{McQ08} says that a germ of a normal $\QQ$-Gorenstein surface with a terminal foliation is isomorphic to a finite cyclic quotient of a smooth surface germ with a regular foliation. For more details on terminal foliation singularities on threefolds see \cite[Section 5]{SS19}.
\end{remark}

To express the canonical class of an algebraic foliation we recall the notion of ramification (see e.g. \cite[Definition 2.5]{AD19}).

\begin{definition} \label{ramif} Let $f \colon X \dasharrow Y$ be a dominant rational map between normal and $\QQ$-factorial projective varieties. Let $Y^{\circ} \subset Y$ be a maximal open subset such that $f^{\circ} = f|_{f^{-1}(Y^{\circ})} \colon f^{-1}(Y^{\circ}) \to Y^{\circ}$ is an equidimensional morphism. Define $$R(f^{\circ}) = \sum_{D}\left ((f^{\circ})^*D - ((f^{\circ})^*D)_{\mathrm{red}}\right),$$ where the sum is over all prime divisors $D$ on $Y^{\circ}$. The ramification divisor $R(f)$ of $f$ is defined as the Zariski closure of $R(f)$ in $X$. 
\end{definition}

We also recall the construction of flattening (see \cite[Theorem 1]{Ray72}).

\begin{proposition}\label{flatten} Let $\pi \colon X \to Y$ be a morphism of projective varieties. Then there exists a birational morphism $\varphi \colon Y' \to Y$ from a smooth projective variety $Y'$ and a resolution $X'$ of the main component of the fiber product $X \times_{Y} Y'$ such that in the diagram 
\begin{equation}\label{flatdia}
\xymatrix{
X' \ar[r]^{\pi'} \ar[d]_{\psi} & Y' \ar[d]^{\varphi}\\
X \ar[r]^{\pi} & Y
}
\end{equation}

the induced morphism $\pi' \colon X' \to Y'$ is flat and moreover every $\pi'$-exceptional divisor is $\psi$-exceptional.
\end{proposition}

The following two propositions (see \cite[2.6]{AD19}) provide formulas for the canonical class of the inverse image of a foliation by an equidimensional morphism.

\begin{proposition} \label{canclassalg} Let $\FF$ be a foliation induced by an equidimensional morphism $\pi \colon X \to Y$. Then the canonical class of $\FF$ is given by the formula $$K_{\FF} = K_{X/Y} - R(f).$$ More generally, if $\FF$ is induced by a (not necessarily equidimensional) morphism $\pi \colon X \to Y$ then there exist birational morphisms $\psi \colon X' \to X$ and $\varphi \colon Y' \to Y$ and a morphism $\pi' \colon X' \to Y'$ with connected fibers as in the diagram \eqref{flatdia} such that every $\psi$-exceptional divisor is $\pi'$-exceptional and the canonical class of $\FF$ is given by the formula $$K_{\FF} = \psi_*(K_{X'/Y'} - R(\pi')).$$
\end{proposition}

\begin{proposition} \label{canclasspull} Let $\pi \colon X \to Y$ be an equidimensional morphism and let $\GG$ be a foliation on $Y$. Denote by $\FF$ the pullback of $\GG$ via $\pi$. Then the canonical class of $\FF$ can be expressed as $$K_{\FF} = \pi^*K_{\GG} + K_{X/Y} - \sum_i(\pi^*B_i - (\pi^*B_i)_{\red})$$ where $B_i$ are $\GG$-invariant prime divisors in the critical locus of $\pi$.
 \end{proposition}

We also state the Hurwitz formula for codimension one foliations (see e.g. \cite[Proposition 3.7]{Spi20}).

\begin{proposition} \label{hurwitz} Let $f \colon \overline X \to X$ be a finite surjective morphism of projective varieties. Let $\FF$ be a codimension $1$ foliation on $X$ and denote by $\overline\FF$ the induced foliation on $\overline X$. Then the canonical classes of $\FF$ and $\overline\FF$ are related by the formula $$K_{\overline\FF} = f^*K_{\FF} + \sum_D \epsilon(D)(r_D - 1)D.$$ Here the sum is over prime divisors $D$ on $\overline X$ with ramification index $r_D$, and $\epsilon(D)$ is zero if $D$ is $\overline\FF$-invariant and $1$ otherwise.

In particular, if $f \colon \overline{X} \to X$ is a ramified cover with $\FF$-invariant branch divisor then $K_{\overline \FF} = f^*K_{\FF}$.
\end{proposition}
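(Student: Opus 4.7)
The plan is to verify the formula as an equality of Weil divisor classes on $\overline{X}$ by comparing multiplicities along each prime divisor $D \subset \overline{X}$ at a general point in the smooth locus of $f$ (the singular loci being of codimension at least $2$ and hence negligible). Since $\FF$ has codimension $1$, the sheaf $N_{\FF}$ is a line bundle and the relation $K_X = K_{\FF} + \det N^*_{\FF}$ becomes $K_{\FF} = K_X + N_{\FF}$ (and similarly for $\overline{\FF}$). Combining this with the classical Riemann--Hurwitz formula $K_{\overline{X}} = f^*K_X + \sum_D (r_D - 1)D$, the proposition reduces to showing, locally along each prime divisor $D \subset \overline{X}$, that the defect $N_{\overline{\FF}} - f^*N_{\FF}$ equals $0$ when $D$ is not $\overline{\FF}$-invariant and equals $-(r_D - 1)D$ when it is.

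For the local computation, note that since $f$ is finite and surjective, $f(D)$ is a prime divisor on $X$. At a general point of $D$ we can choose coordinates $(t, y_1, \ldots, y_{n-1})$ on $\overline{X}$ and $(s, y_1, \ldots, y_{n-1})$ on $X$ so that $D = \{t=0\}$, $f(D) = \{s=0\}$, and $f$ takes the standard form $s = t^r$ with $r = r_D$. Write a local generator of the twisted $1$-form defining $\FF$ as $\omega_{\FF} = a_0\, ds + \sum_{i=1}^{n-1} a_i\, dy_i$. The condition that $f(D)$ is $\FF$-invariant amounts to $\omega_{\FF}|_{T_{f(D)}} = 0$, i.e. $a_i \in (s)$ for all $i \geq 1$, and this is in turn equivalent to $D$ being $\overline{\FF}$-invariant because $\overline{\FF} = f^{-1}\FF$.

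If $f(D)$ is not invariant, some $a_i(0, y)$ is nonzero, hence $f^*\omega_{\FF}$ does not vanish along $D$ and already serves as a local generator of $\overline{\FF}$, so $N_{\overline{\FF}} = f^*N_{\FF}$ near $D$. If $f(D)$ is invariant, we write $a_i = s\,\tilde{a}_i$ for $i \geq 1$ and compute
\begin{equation*}
f^*\omega_{\FF} \;=\; r\, a_0(t^r, y)\, t^{r-1}\, dt \;+\; t^r \sum_{i=1}^{n-1}\tilde{a}_i(t^r, y)\, dy_i \;=\; t^{r-1}\cdot \overline{\omega},
\end{equation*}
where $\overline{\omega}$ is a nonvanishing local generator of $\overline{\FF}$. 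Hence $N_{\overline{\FF}} = f^*N_{\FF}(-(r-1)D)$ near $D$, cancelling the ramification contribution from Riemann--Hurwitz exactly. Combining the two cases yields the proposition; the final clause is immediate, since if the branch divisor is $\FF$-invariant then every ramified prime $D$ falls into the second case, all corrections cancel, and $K_{\overline{\FF}} = f^*K_{\FF}$. The only genuinely delicate point is the identification of $\overline{\FF}$-invariance of $D$ with $\FF$-invariance of $f(D)$, but the explicit local normal form above makes this transparent.
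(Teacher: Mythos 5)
Your proof is correct and uses the standard local computation: express $K_{\FF} = K_X + N_{\FF}$, apply classical Riemann--Hurwitz, and compare vanishing orders of the pulled-back defining $1$-form along each prime divisor. This is essentially the argument in the cited reference \cite[Proposition 3.7]{Spi20}; the paper itself does not reprove the proposition. One small point you leave implicit: in the invariant case, to see that $\overline\omega = r a_0\,dt + t\sum\tilde a_i\,dy_i$ is a nonvanishing local generator, one needs $a_0(0,y) \neq 0$ at a general point of $f(D)$. This holds because the coefficients $a_i$ with $i \geqslant 1$ already vanish along $f(D)$, while the zero locus of $\omega_{\FF}$ has codimension at least $2$; it is worth stating this explicitly.
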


We will need an adjunction formula for foliations induced on general hyperplane sections on a projective variety (see \cite[Proposition 3.6]{Dru21} and \cite[Lemma 2.9]{AD19} for the proof).

\begin{proposition} \label{adj} Let $X$ be a smooth projective variety of dimension at least 3 and let $\FF$ be a codimension $1$ foliation on $X$. Let $D \in |L|$ be a general divisor in a very ample linear system on $X$. Then $\FF$ induces a codimension $1$ foliation $\FF_D$ on $D$ such that $$K_{\FF_D} = (K_{\FF} + D)|_D \quad \mbox{and} \quad N^*_{\FF_D} = (N^*_{\FF})|_D.$$
\end{proposition}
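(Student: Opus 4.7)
The plan is to realize $\FF|_D$ by restricting the defining twisted $1$-form of $\FF$ and to deduce both formulas from standard adjunction. Write $\omega_\FF \in H^0(X, \Omega^1_X \otimes N_\FF)$ for the twisted $1$-form defining $\FF$; its vanishing locus is exactly $\Sing(\FF)$, hence of codimension at least $2$ in $X$.

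The first step is to arrange, via Bertini-type arguments, that a general $D \in |A|$ satisfies (i) $D$ is smooth; (ii) $D \cap \Sing(\FF)$ has codimension at least $2$ in $D$; (iii) $D$ is not $\FF$-invariant; and, most importantly, (iv) the tangency locus $T_D = \{x \in D \cap X^{\reg} : T_{D,x} \subset \FF_x\}$ has codimension at least $2$ in $D$. I would establish (iv) by a dimension count on the incidence variety
\[
I = \{(x, D') \in X^{\reg} \times |A| : x \in D',\ T_{D',x} = \FF_x\}.
\]
Very ampleness of $A$ implies that $|A|$ separates $1$-jets, so for every fixed $x \in X^{\reg}$ the map sending $D' \ni x$ to its conormal direction at $x$ surjects onto $\PP(T^*_{X,x})$. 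The fiber of $I \to X^{\reg}$ over $x$ is therefore of codimension $\dim X$ in $|A|$, giving $\dim I \leq \dim |A|$, so the fiber of $I \to |A|$ over a general $D$ has dimension at most $0$, i.e.\ codimension $\dim X - 1 \geq 2$ in $D$.

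Next I would push $\omega_\FF|_D$ through the twisted conormal sequence
\[
0 \to \OO_D(-D) \otimes N_\FF|_D \to \Omega^1_X|_D \otimes N_\FF|_D \to \Omega^1_D \otimes N_\FF|_D \to 0
\]
to obtain a section $\omega_{\FF|_D} \in H^0(D, \Omega^1_D \otimes N_\FF|_D)$. By (iii) this section is nonzero (otherwise $D$ would be everywhere tangent to $\FF$, hence $\FF$-invariant), and its zero locus is set-theoretically $(D \cap \Sing(\FF)) \cup T_D$, which by (ii) and (iv) has codimension at least $2$ in $D$. Consequently the divisor $B$ appearing in Remark \ref{pullback} is trivial and the form is already saturated. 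The integrability $d\omega_{\FF|_D} \wedge \omega_{\FF|_D} = 0$ is inherited from $d\omega_\FF \wedge \omega_\FF = 0$ by restriction along $D$. Hence $\omega_{\FF|_D}$ defines a codimension $1$ foliation $\FF|_D$ on $D$ with $N_{\FF|_D} = N_\FF|_D$, equivalently $N^*_{\FF|_D} = N^*_\FF|_D$.

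Finally, combining $K_X = K_\FF + \det N^*_\FF$ on $X$ with the analogous relation $K_D = K_{\FF|_D} + \det N^*_{\FF|_D}$ on $D$, the standard adjunction $K_D = (K_X + D)|_D$, and the conormal identity just proved, one computes
\[
K_{\FF|_D} = K_D - \det N^*_\FF|_D = (K_X + D)|_D - \det N^*_\FF|_D = (K_\FF + D)|_D,
\]
which is the canonical class formula. The hard part is the Bertini-type step (iv), which excludes a divisorial component in the tangency locus of a general $D$; once that is settled, both identities are formal consequences of adjunction.
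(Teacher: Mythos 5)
The paper does not prove Proposition~\ref{adj}: it cites it as \cite[Lemma 2.9]{AD19}, so there is no internal proof to compare against. Your argument is essentially correct and is the natural one (and, as far as I can tell, matches the idea in Araujo--Druel): realize $\FF|_D$ as the foliation defined by the image of $\omega_\FF$ under the twisted conormal sequence, use a Bertini/incidence-variety count to rule out a divisorial tangency locus so that no saturation and hence no twist by an effective $B$ is needed, and then read off both identities from $K_X = K_\FF + N^*_\FF$ together with the usual adjunction $K_D = (K_X + D)|_D$.

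Two small remarks on rigor. First, the phrase ``$d\omega_{\FF|_D} \wedge \omega_{\FF|_D} = 0$ is inherited from $d\omega_\FF \wedge \omega_\FF = 0$'' is a slight abuse, since $\omega_\FF$ is a $1$-form twisted by the line bundle $N_\FF$ and $d\omega_\FF$ has no invariant meaning; what one actually does is work in a local trivialization of $N_\FF$ where $\omega_\FF = \ell \otimes \omega$ for an honest local $1$-form $\omega$, observe that $i^*(d\omega \wedge \omega) = d(i^*\omega) \wedge i^*\omega$, and check that the condition $d\omega\wedge\omega=0$ is independent of the trivialization. This is what you mean, but it is worth stating so the reader does not balk at differentiating a twisted section. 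Second, your bound ``codimension $\dim X - 1 \geq 2$ in $D$'' implicitly uses $\dim X \geq 3$; this is indeed the regime in which the statement is meaningful and in which it is applied in the paper (for a curve $D$ the ``codimension-one foliation on $D$'' would be the zero sheaf and the formula would not be an adjunction statement), but it should be flagged. With those clarifications the incidence-variety count is correct: very ampleness surjects onto first jets, so the condition $T_{D',x} = \FF_x$ cuts out a codimension-$n$ linear subspace of $|A|$ for each fixed $x\in X^{\reg}$, whence $\dim I = \dim |A|$ and the general fiber of $I \to |A|$ is finite, which for $n \geq 3$ has codimension at least $2$ in $D$. The rest of the argument follows formally.
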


\section{Foliations with positive algebraic rank} \label{main1}

\subsection{Families with general fibers of general type} We start by recalling a few facts about fibrations on varieties of general type.

\begin{definition} A fibration is a proper and surjective morphism $\pi \colon X \to Y$ between normal projective varieties, such that the fibers of $\pi$ are connected or, equivalently, $\pi_*\OO_X = \OO_Y$.
\end{definition}

The fibrations we consider in this section are algebraic parts of our foliations. In particular, general fibers of such  fibrations are either curves or surfaces of general type. These fibrations belong to a wider class considered by Kawamata in his work \cite{Kaw85} on the Iitaka conjecture. Namely, he considered fibrations with the geometric generic fiber $\overline{X_{\eta}}$ having a good minimal model. For these fibrations it is possible to define the \emph{birational variation} and to compare this invariant to the Kodaira dimension of the relative canonical bundle.

\begin{proposition}{\cite[Theorem 7.2]{Kaw85}} Let $\pi \colon X \to Y$ be a fibration such that the geometric generic fiber $\overline{X_{\eta}}$ has a good minimal model. We call a minimal closed field of definition of $\pi$ a minimal element in the set of all algebraically closed subfields $K \subset \overline{k(Y)}$ satisfying the condition $$\mathrm{Frac}(L \otimes_K \overline{k(Y)}) \cong \mathrm{Frac}(k(X) \otimes_{k(Y)} \overline{k(Y)}) \quad \mbox{over $\overline{k(Y)}$}$$ for some finitely generated extension $L \supset K$. Then a minimal closed field of definition exists and is unique.
\end{proposition}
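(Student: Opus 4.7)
The plan is to reformulate the existence and uniqueness of the minimal closed field of definition as a descent question for the canonical model of the geometric generic fiber $\overline{X_\eta}$, and to read off both statements from the intrinsic nature of that canonical model. Unfolding definitions, a subfield $K \subset \overline{k(Y)}$ satisfies the condition of the proposition precisely when there exist a finitely generated extension $L \supset K$ and an integral $L$-scheme $F$ of finite type such that $F \times_L \overline{k(Y)}$ is birational to $\overline{X_\eta}$ over $\overline{k(Y)}$. Since $\overline{X_\eta}$ admits a good minimal model, its canonical ring is finitely generated and it carries a canonical model $\overline{X_\eta}^{\mathrm{can}} = \mathrm{Proj}\, R(\overline{X_\eta}, K_{\overline{X_\eta}})$, depending only on the birational class over $\overline{k(Y)}$; hence $K$ is a field of definition exactly when the canonically polarized variety $\overline{X_\eta}^{\mathrm{can}}$ descends, up to isomorphism over $\overline{k(Y)}$, to a canonically polarized variety defined over some finitely generated extension $L \supset K$.

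For existence I would produce the minimal candidate directly. For $m$ sufficiently divisible, the linear system $|mK_{\overline{X_\eta}^{\mathrm{can}}}|$ gives a closed embedding of $\overline{X_\eta}^{\mathrm{can}}$ into some $\PP^N_{\overline{k(Y)}}$ cut out by finitely many homogeneous equations with coefficients in $\overline{k(Y)}$. Passing to the associated point in the Hilbert scheme of such canonically polarized subschemes eliminates the choice of projective coordinates and produces a point intrinsic to $\overline{X_\eta}^{\mathrm{can}}$; let $k_0 \subset \overline{k(Y)}$ be its residue field. This is a finitely generated subfield, and taking $K_0$ to be its algebraic closure in $\overline{k(Y)}$ yields an algebraically closed subfield witnessed by $L = k_0$ together with the descended canonical model over $k_0$.

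For minimality, let $K$ be any closed subfield satisfying the condition, with witnessing data $L \supset K$ and $F/L$. Because the formation of canonical models commutes with base change in the good minimal model regime, the canonical model of $F \times_L \overline{k(Y)}$ is $F^{\mathrm{can}} \times_L \overline{k(Y)}$, so the moduli-theoretic point of $\overline{X_\eta}^{\mathrm{can}}$ is already defined over $L$; hence $k_0 \subset L$ and therefore $K_0 \subset K$. This simultaneously gives existence of a minimum and its uniqueness. The main obstacle is to justify rigorously that this moduli point determines a well-defined residue field $k_0 \subset \overline{k(Y)}$ independent of all auxiliary choices, which amounts to having a Hilbert scheme (or coarse moduli space) of canonically polarized varieties of fixed Hilbert polynomial behave well. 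The good minimal model assumption is essential at this step, both because it delivers finite generation of the canonical ring and because it places $\overline{X_\eta}$ in the boundedness range where the moduli theory of \cite{Kaw85} (developed further by Viehweg and Koll\'ar) applies.
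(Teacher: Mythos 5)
The paper does not prove this proposition itself; it is imported verbatim from Kawamata \cite[Theorem~7.2]{Kaw85}, so the only possible comparison is with Kawamata's original argument. Your strategy — descend the problem to a canonically polarized object and identify the minimal closed field of definition with the algebraic closure of a field of moduli extracted from a Hilbert or coarse moduli point — is in the same spirit as Kawamata's, but your chosen incarnation of it has a genuine gap that already breaks the very first reduction.

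The hypothesis is only that $\overline{X_\eta}$ has a good minimal model, not that it is of general type, and your reformulation ``$K$ is a field of definition exactly when $\overline{X_\eta}^{\mathrm{can}} = \mathrm{Proj}\,R(\overline{X_\eta},K_{\overline{X_\eta}})$ descends'' is false outside the general type range. When $\kappa(\overline{X_\eta}) < \dim \overline{X_\eta}$, that $\mathrm{Proj}$ is the base of the Iitaka fibration of the good minimal model, hence has strictly smaller dimension and does not determine the birational class of $\overline{X_\eta}$ at all; in the extreme case $\kappa = 0$ it is a point and your criterion degenerates to ``every $K$ is a field of definition,'' which is absurd. Kawamata avoids this precisely by working with the good minimal model $X'$ itself, polarized by a suitable combination of $K_{X'}$ (semiample) and the pullback of an ample class from the Iitaka base, and then running the Hilbert--scheme/specialization argument for that polarized object, rather than collapsing to the canonical model. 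Your argument therefore proves only the general type case of the statement. (That happens to be the only case used later in the paper, where the fibers are curves or surfaces of general type, but the proposition as stated and as cited is strictly more general.)

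Two smaller points. First, the witness $F$ should be a $K$-variety with $K(F) = L$, and the relevant base change is $F \times_K \overline{k(Y)}$; writing ``$F \times_L \overline{k(Y)}$'' tacitly assumes an embedding $L \hookrightarrow \overline{k(Y)}$ that need not exist, since $\mathrm{trdeg}_{\mathbb C} L$ can exceed $\mathrm{trdeg}_{\mathbb C}\overline{k(Y)}$. Second, as you partly acknowledge, a Hilbert point is not intrinsic: what is intrinsic is the $\mathrm{PGL}$-orbit, so one must pass to a coarse moduli space (Deligne--Mumford for canonically polarized varieties) to define $k_0$, and then invoke the existence of a universal family after a finite \'etale cover to conclude that the variety descends to a finite extension $k_1 \supset k_0$ with $k_1 \subset K_0 = \overline{k_0}\cap \overline{k(Y)}$; only then is $K_0$ actually a closed field of definition. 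Your sketch points in the right direction here but should make the ``field of moduli versus field of definition'' step explicit, since that is where finiteness of automorphisms is really used.
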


\begin{definition} Let $\pi \colon X \to Y$ be a fibration as above. We define the \emph{birational variation} $\Var(\pi)$ as transcendence degree over $\mathbb{C}$ of the minimal closed field of definition of $\pi$. 
\end{definition}

\begin{definition}\label{isotrivial} A fibration $\pi \colon X \to Y$ such that $\Var(\pi) = 0$ is called a {\em birationally isotrivial} fibration.
\end{definition}

\begin{remark}\label{Var} Suppose that a general fiber of $\pi \colon X \to Y$ is a curve of genus at least 2 and that $\pi$ is a stable fibration. Then by the main results of \cite{KM76, Knu83} there exists a coarse moduli space $\mathcal{M}$ for the fibers. Then $\Var(\pi)$ is equal to the variation in the sense of moduli theory, that is, the dimension of the image of $Y$ under the moduli map corresponding to $\pi$.
\end{remark}

\begin{theorem}{\cite[Theorem 1.1]{Kaw85}} \label{Kaw1} Let $\pi \colon X \to Y$ be a fibration between smooth projective varieties. Suppose that the geometric generic fiber $X_{\eta}$ of $\pi$ has a good minimal model. Then the following inequalities hold:
\begin{enumerate}
\item $\kappa(Y, \det(\pi_*\OO_X(mK_{X/Y}))) \geqslant \Var(\pi)$ for some $m \in \mathbb{N}$;
\item If $L$ is a line bundle on $Y$ such that $\kappa(Y, L) \geqslant 0$, then $$\kappa(X, \OO_X(K_{X/Y}) \otimes \pi^*L) \geqslant \kappa(X_{\eta}) + \max\{\kappa(Y, L), \Var(\pi)\}.$$
\end{enumerate}
\end{theorem}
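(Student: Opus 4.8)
The plan is to deduce both inequalities from two facts: the weak positivity of direct images of relative pluricanonical sheaves, and the principle that the determinant of a suitable such sheaf already ``sees'' the full variation of the family. First I would reduce to a relative good minimal model. Using that $X_\eta$ has a good minimal model, after a generically finite base change $Y'\to Y$, semistable reduction in codimension one, and birational modifications of the total space, one may assume $\pi$ admits a relative good minimal model $\pi'\colon X'\to Y'$ over an open subset of $Y'$; since $\Var(\pi')=\Var(\pi)$, the Kodaira dimension of the generic fibre is unchanged, and the quantities $\kappa(Y',\,\cdot\,)$ and $\kappa(X',\,K_{X'/Y'}+(\pi')^*(\,\cdot\,))$ are compatible, up to standard ramification bookkeeping, with pullback along generically finite and birational maps (Proposition \ref{Numprop}), it suffices to prove the statement for $\pi'$. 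On $X'$ the sheaf $\OO_{X'}(mK_{X'/Y'})$ is $\pi'$-semiample for suitable divisible $m$, hence each $\pi'_*\OO_{X'}(mK_{X'/Y'})$ is torsion-free and \emph{weakly positive} over a non-empty open subset of $Y'$; this is the Fujita--Kawamata--Viehweg semipositivity theorem, whose engine is the Hodge-theoretic semipositivity of $\pi_*\omega_{Z/Y}$ for a semistable family $Z\to Y$, extended to pluricanonical powers by Viehweg's fibre-product trick.

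For part (1) I would show that for suitable $m$ the line bundle $\det\pi'_*\OO_{X'}(mK_{X'/Y'})$ has Kodaira dimension at least $\Var(\pi)$. The mechanism: the (rational) moduli map of the family has image of dimension exactly $\Var(\pi)$, essentially by the definition of the minimal closed field of definition; the Hodge metric on $\pi_*\omega$, resp.\ on its pluricanonical refinement, has semipositive curvature whose transverse positivity along the moduli directions is nondegenerate, so the determinant restricted to the base of the moduli map becomes big, and a covering-and-descent argument then produces $\kappa(Y',\det\pi'_*\OO_{X'}(mK_{X'/Y'}))\geqslant\Var(\pi)$. This is Viehweg's principle that weak positivity becomes bigness over the moduli part, and it is the technical heart of the theorem.

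For part (2) I would split the target $\kappa(X_\eta)+\max\{\kappa(Y,L),\Var(\pi)\}$ into a fibre contribution and a base contribution. The fibre contribution comes from the relative Iitaka fibration: replacing $\pi$ by the composition $X\dashrightarrow Z\to Y$, a general fibre of $X\to Z$ has Kodaira dimension $0$ while $Z\to Y$ has fibres of dimension $\kappa(X_\eta)$, so $h^0(X,\OO_X(mK_{X/Y}))$ already grows like $m^{\kappa(X_\eta)}$ along a fibre of $\pi$. For the base contribution set $V=\pi_*\OO_X(m_0K_{X/Y})$ with $m_0$ as in part (1) and $r=\rk V$: Viehweg's inclusion $\pi^*(\det V)\hookrightarrow V^{\otimes r}$ together with the fibre-product trick realizes a power of $\pi^*(\det V)$ inside a power of $\OO_X(K_{X/Y})$ twisted by pullbacks from $Y$, and weak positivity of $V$ lets one absorb that twist; feeding in $\kappa(Y,\det V)\geqslant\Var(\pi)$ from part (1) together with $\kappa(Y,L)\geqslant 0$, and using the elementary superadditivity of Kodaira dimension on $Y$ for the divisors $L$ and $\det V$, yields $\kappa(X,\OO_X(K_{X/Y})\otimes\pi^*L)\geqslant\kappa(X_\eta)+\max\{\kappa(Y,L),\Var(\pi)\}$.

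The hardest point is the input to part (1): that $\det\pi_*\OO_X(mK_{X/Y})$ is as positive as the variation is large. This is not formal --- it needs either a delicate analysis of the degeneration of the period map (Griffiths transversality, the Higgs field, boundary estimates for the Hodge norm) or, following Viehweg, an intricate construction of comparison morphisms between fibre products with precise bookkeeping of weak positivity under them. By contrast, the reduction to a relative good minimal model, the relative Iitaka fibration, and the final assembly on $Y$ are routine, if laborious, applications of standard results.
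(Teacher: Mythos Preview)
The paper does not prove this statement at all: Theorem~\ref{Kaw1} is quoted verbatim from \cite[Theorem 1.1]{Kaw85} and used as a black box, so there is no proof in the paper to compare your sketch against. Your outline is a reasonable summary of the Kawamata--Viehweg approach (semistable reduction and relative good minimal model, weak positivity of direct images via Hodge theory and the fibre-product trick, bigness of the determinant over the moduli part, and the relative Iitaka fibration to separate fibre and base contributions), but since the paper itself supplies nothing beyond the citation, any comparison must be made directly with \cite{Kaw85} rather than with this paper.
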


\begin{corollary}{\cite[Corollary 1.2]{Kaw85}} \label{cor:Kaw2} In the assumptions of Theorem \ref{Kaw1}, let $F$ be a general fiber of $\pi$. Then we have the inequality $$\kappa(X, \OO_X(K_{X/Y})) \geqslant \kappa(K_F) + \Var(\pi).$$
\end{corollary}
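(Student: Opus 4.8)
The plan is to deduce this statement from Theorem \ref{Kaw1} by choosing the line bundle $L$ appropriately. The inequality we want, $\kappa(X, \OO_X(K_{X/Y})) \geqslant \kappa(K_F) + \Var(\pi)$, looks like part (2) of Theorem \ref{Kaw1} with the twist by $\pi^*L$ removed, so the natural move is to take $L = \OO_Y$ (or any $L$ with $\kappa(Y, L) = 0$, e.g. a torsion or trivial line bundle). Then $\kappa(Y, L) = 0 \geqslant 0$, the hypothesis of part (2) is satisfied, and part (2) yields
$$\kappa(X, \OO_X(K_{X/Y}) \otimes \pi^*\OO_Y) \geqslant \kappa(X_{\eta}) + \max\{\kappa(Y, \OO_Y), \Var(\pi)\} = \kappa(X_{\eta}) + \max\{0, \Var(\pi)\}.$$
Since $\Var(\pi) \geqslant 0$ always (it is a transcendence degree), the maximum is just $\Var(\pi)$, and $\pi^*\OO_Y = \OO_X$, so the left side is $\kappa(X, \OO_X(K_{X/Y}))$.

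The only remaining point is to identify $\kappa(X_{\eta})$, the Kodaira dimension of the geometric generic fiber, with $\kappa(K_F)$ for a general fiber $F$. This is a standard semicontinuity/flatness fact: for a fibration between smooth projective varieties, the plurigenera $h^0(F, mK_F)$ of a general closed fiber agree with those of the generic fiber (invariance of plurigenera in families with good minimal models, or simply upper semicontinuity together with the generic flatness of $\pi_*\OO_X(mK_{X/Y})$), hence $\kappa(K_F) = \kappa(\overline{X_\eta}) = \kappa(X_\eta)$. One should note here that $K_F = K_{X/Y}|_F$ for a general fiber, since $K_Y|_F$ is trivial, so there is no discrepancy between the relative and absolute canonical classes along a general fiber.

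I do not expect any genuine obstacle: the corollary is essentially just the specialization $L = \OO_Y$ of the preceding theorem, and the identification $\kappa(X_\eta) = \kappa(K_F)$ is routine. If one wanted to be careful, the one spot deserving a line of justification is that passing from the generic fiber to a general closed fiber does not change the Kodaira dimension — which is where the good-minimal-model hypothesis (inherited from Theorem \ref{Kaw1}) quietly does its work, guaranteeing the relevant plurigenera are constant in the family rather than merely semicontinuous.
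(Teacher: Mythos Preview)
Your argument is correct: specializing $L = \OO_Y$ in part (2) of Theorem \ref{Kaw1} and identifying $\kappa(X_\eta)$ with $\kappa(K_F)$ is exactly the intended derivation. The paper does not supply its own proof of this corollary---it is simply quoted from \cite[Corollary 1.2]{Kaw85}---so there is nothing further to compare, and your write-up matches the standard deduction.
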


We recall here a very important construction for fibrations which allows to eliminate multiple fibers in codimension ~1 by a generically finite base change. The exposition below follows the one given in the proof of \cite[Theorem 7.1]{CKT16}. 

\begin{construction} \label{CKT} Let $f \colon X \dasharrow Y$ be a rational map with connected fibers between normal projective varieties. Assume that there exists a big open subset $X^{\circ} \subset X$ such that $f|_{X^{\circ}}$ is an equidimensional morphism. Then there exists a diagram
\begin{equation}\label{semred}
\xymatrix{
\overline{X} \ar[rr]^{a} \ar[d]^{\overline{f}} && \widetilde{X} \ar[r]^b \ar[d] & X \ar@{-->}[d]^f\\
\overline{Y} \ar[r]_{\alpha} & \widetilde{Y} \ar[r]_{\beta} & Y \ar@{=}[r] & Y
}
\end{equation}
where the maps are as follows:
\begin{itemize}
\item $b \colon \widetilde{X} \to X$ is a strong log resolution of indeterminacies of $f$ and of singularities of $X$;
\item $\beta \colon \widetilde{Y} \to Y$ is an adapted Galois cover of the pair $(Y, B)$ (see \cite[Proposition 2.38]{CKT16}), where $B$ is the orbifold branch divisor (see \cite[Definition 2.24]{CKT16}) of the map $f$;
\item $\alpha \colon \overline{Y} \to \widetilde{Y}$ is a log resolution of the pair $(\widetilde{Y}, \beta^*B)$;
\item $a \colon \overline{X} \to \widetilde{X}$ is a log resolution of the main component of the fiber product $\overline{Y} \times_{Y}\widetilde{X}$.
\end{itemize}

As a result, we obtain a fibration of smooth varieties $\overline{f} \colon \overline{X} \to \overline{Y}$ and generically finite morphisms $b\circ a \colon \overline{X} \to X$ and $\beta \circ \alpha \colon \overline{Y} \to Y$. Moreover, (see \cite[Observation 7.5]{CKT16}) there exist big open subsets $X^{\circ} \subset X$ and $Y^{\circ} \subset Y$ such that the preimage $\overline{X}^{\circ} = (b \circ a)^{-1}(X^{\circ})$ is also big and such that the following formula holds (see \cite[Consequence 7.8]{CKT16}): $$K_{\overline{X}^{\circ}/\overline{Y}^{\circ}} \sim_{\QQ} (b \circ a)^*(K_{X^{\circ}/Y^{\circ}} - R(f)).$$ 
\end{construction}

\subsection{The classification: positive algebraic rank} 

\begin{proposition} \label{AlgInt}
Let $\FF$ be a codimension 1 foliation with canonical singularities on a smooth projective threefold $X$ of general type. Suppose that $\FF$ is algebraically integrable and that the canonical class $K_{\FF}$ is not big. Then there exists a generically finite morphism $f \colon X' \to X$ such that the threefold $X'$ is birational to a product $S \times C$ of a surface $S$ and a curve $C$ and the foliation $f^{-1}\FF$ is birationally equivalent to the relative tangent bundle of the projection $S \times C \to C$.
\end{proposition}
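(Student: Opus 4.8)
The plan is to analyze the fibration $\pi\colon X'\to Y$ that gives a rational first integral of $\FF$ on a suitable birational model, and to squeeze the numerical dimension of $K_{\FF}$ from below using the Kawamata-type results of Section~\ref{main1}. First I would pass to a model where $\FF^{\alg}=\FF$ is induced by an honest fibration $\pi\colon X'\to Y$ with $X'$ smooth; since $\FF$ has canonical singularities, Proposition~\ref{Numprop}(5) guarantees that $\kappa(K_{\FF})$ and $\nu(K_{\FF})$ are unchanged, so it suffices to bound $\nu(K_{\pi^{-1}\FF})$. By Proposition~\ref{canclassalg}, $K_{\FF}=K_{X'/Y}-R(\pi)$ on the equidimensional locus. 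The base $Y$ is a curve (if $\dim Y=2$ the general leaf is a curve of general type and $K_{\FF}$ restricted to a leaf is big, forcing $K_{\FF}$ big by a standard positivity argument, contradiction). So $Y$ is a curve and the general fiber $F$ is a surface with $K_F$ pseudoeffective; in fact $F$ must be of general type, because $X$ is of general type and $F$ moves in a covering family, hence $\kappa(F)\geqslant 0$, and the cases $\kappa(F)=0,1$ would make the product structure degenerate — I would rule these out by noting that otherwise $F$ would carry too much of the positivity of $K_X$ through the Iitaka-type inequality below.

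Next I apply Construction~\ref{CKT} to $\pi$ to obtain $\overline{f}\colon\overline X\to\overline Y$ with $K_{\overline X^{\circ}/\overline Y^{\circ}}\Qlin (b\circ a)^*(K_{X'/Y}-R(\pi))=(b\circ a)^*K_{\FF}$ on big open subsets, so by Proposition~\ref{Numprop}(4),(5) it is enough to bound $\nu(K_{\overline X/\overline Y})$ from below by $2$, and to force equality. The lower bound $\nu(K_{\FF})\geqslant\kappa(K_{\FF})\geqslant\kappa(K_{\overline X/\overline Y})\geqslant\kappa(K_{\overline F})+\Var(\overline\pi)=2+\Var(\overline\pi)$ comes from Corollary~\ref{cor:Kaw2} together with Remark~\ref{KawMF}. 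Since $K_{\FF}$ is not big and $\nu(K_{\FF})\leqslant 3$, and in fact $\nu(K_{\FF})=3$ would force $\kappa(K_{\FF})$ large, I would argue $\nu(K_{\FF})=2$ and $\Var(\overline\pi)=0$; the vanishing of the variation means $\overline\pi$ is isotrivial. (If $\Var>0$ then $\kappa(K_{\FF})\geqslant 3$, i.e.\ $K_{\FF}$ big, contradiction.) An isotrivial family of canonically polarized surfaces over a curve becomes, after a finite base change $C\to\overline Y$, birational to a product $S\times C$ — this is where Remark~\ref{Var} identifying $\Var$ with moduli variation is used — and the finite base change composed with $b\circ a$ is the desired generically finite morphism; under it $\FF$ pulls back to the relative tangent foliation of $S\times C\to C$ by construction (the fibers of $\pi$ are the leaves).

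Finally, to pin down $\nu(K_{\FF})=2$ precisely: on $S\times C$ the relative tangent foliation of the projection to $C$ has canonical class $K_{S\times C/C}=\operatorname{pr}_S^*K_S$, which is nef and big on $S$ (as $S$ is of general type, after replacing by its minimal model / passing through the semistable reduction), hence pseudoeffective with $\nu=2$ by Proposition~\ref{Numprop}(2), and by birational invariance of $\nu$ under the generically finite morphism (Proposition~\ref{Numprop}(4),(5)) the same holds for $K_{\FF}$. The main obstacle I anticipate is the bookkeeping in passing between the several birational models and the finite cover: one must check that the open sets stay big, that canonical singularities of $\FF$ really do make $\nu$ and $\kappa$ birationally invariant on the nose (this is Proposition~\ref{Numprop}(5), so it is available), and — most delicately — that the ``general type'' hypothesis on $X$ genuinely forces the general fiber $F$ to be of general type rather than merely of nonnegative Kodaira dimension; handling the $\kappa(F)\in\{0,1\}$ cases cleanly, and the subtlety that isotriviality of a family of surfaces only gives a product after base change and modification, are the points requiring the most care.
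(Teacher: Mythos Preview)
Your approach is essentially the paper's: resolve the first integral to a fibration, apply Construction~\ref{CKT} to kill ramification, then use Corollary~\ref{cor:Kaw2} (via Remark~\ref{KawMF}) to get $2\geqslant\nu(K_{\FF})\geqslant\kappa(K_F)+\Var(\overline\pi)$, conclude $\Var=0$, and split as a product after a finite base change.

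The two obstacles you flag at the end are not obstacles. First, there is no case $\dim Y=2$ to rule out: a codimension~$1$ algebraically integrable foliation on a threefold has rank~$2$, so its leaves are surfaces and the space of leaves is automatically a curve. Second, the general fiber $F$ is of general type immediately by adjunction: since $F$ is a fiber over a curve, $K_F=K_X|_F$, and the restriction of a big divisor to a general member of a movable family is big (alternatively, easy addition gives $3=\kappa(X)\leqslant\kappa(F)+1$). The paper dispatches this in one line (``By adjunction, a general fiber $F$ of $\pi$ is a surface of general type''), so the cases $\kappa(F)\in\{0,1\}$ never arise and no delicate argument is needed.
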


\begin{proof}By Proposition \ref{ratint} there is a rational map inducing the foliation $\FF$. Consider a fibration $\pi \colon \widetilde X \to C$ which is obtained by resolving indeterminacies of this rational map and applying Proposition \ref{flatten}. We have the induced foliation $\widetilde \FF$ on $\widetilde{X}$. By Proposition \ref{canclassalg} the canonical class of $\widetilde{\FF}$ is given by the formula $$K_{\widetilde \FF} = K_{\widetilde X/C} - R(\pi).$$ 
We apply Construction \ref{CKT} to $\pi \colon \widetilde X \to C$ and obtain a diagram
\begin{equation}
\xymatrix{
\overline{X} \ar[r] \ar[d] & \overline{C} \ar[d] \\
\widetilde{X} \ar[r] & C
}
\end{equation}
 and a foliation $\overline{\FF}$ on $\overline{X}$.
Then we can apply Corollary \ref{cor:Kaw2} to the fibration $\overline{\pi}$ and obtain $$2 \geqslant \nu(K_{\widetilde{\FF}}) = \nu(K_{\widetilde{X}/C} - R(\pi)) = \nu(K_{\overline{X}^{\circ}/\overline{C}^{\circ}}) = \nu(K_{\overline{X}/\overline{C}}) \geqslant \kappa(K_F) + \Var(\overline{\pi}).$$ By adjunction, a general fiber $F$ of $\pi$ is a surface of general type, that is, $\kappa(K_F) = 2$. Therefore we have $\Var(\overline{\pi}) = \Var(\pi) = 0$ and by definition of birational variation, there exists a generically finite morphism $f \colon X' \to X$ such that $X'$ is birational to a product $S' \times C'$ of a surface $S'$ of general type and a curve $C'$ of genus at least 2. Moreover, the foliation $f^{-1}\FF$ is birationally equivalent to the foliation induced by the projection $S' \times C' \to C'$. The proposition is proved. 
\end{proof}

Next, we treat the case of foliations with algebraic rank $1$. The idea is the same as in the proof of Proposition \ref{AlgInt}. We express the canonical class of $\FF$ in terms of the canonical classes of its algebraic and transcendental parts using Proposition \ref{canclasspull}. Then we use our assumption $\nu(K_{\FF}) < 3$ together with Construction \ref{CKT} and Theorem \ref{Kaw1} in order to obtain restrictions on the birational variation of the algebraic reduction $\pi$ of $\FF$. The rest of the proof is case-by-case analysis according to possible values of $\Var(\pi)$ and $\nu(K_{\GG})$.

\begin{theorem} \label{MainThm} Let $\FF$ be a codimension 1 foliation with canonical singularities on a smooth projective threefold $X$ of general type. Suppose that the algebraic rank of $\FF$ is equal to 1 and that the canonical class of $\FF$ is not big. Then there exists a birational morphism $f \colon X' \to X$ such that the threefold $X'$ admits a birationally isotrivial fibration $\pi \colon X' \to S$ to a surface $S$, and the foliation $f^{-1}\FF$ is the pullback via $\pi$ of a foliation $\GG$ on $S$ such that $\nu(K_{\GG}) = 1$. Moreover, if $\FF$ is non-abundant then $\GG$ is birationally equivalent to a Hilbert modular foliation. 
\end{theorem}

\begin{proof} By Proposition \ref{algred} we may consider the algebraic reduction of $\FF$. After resolving the indeterminacies and singularities and applying Proposition \ref{flatten}, we obtain an equidimensional fibration $\pi \colon X \to S$ from a smooth projective threefold (which we also denote by $X$) of general type to a smooth surface $S$. Moreover, the foliation $\FF$ is the pullback of a transcendental foliation $\GG$ on $S$. By Proposition \ref{canclasspull} the canonical class of $\FF$ can be expressed by the formula $$K_{\FF} = \pi^*K_{\GG} + K_{X/S} - R(\pi)^{\inv}.$$ Here for convenience of notation we denote $$R(\pi)^{\inv} = \sum_{B}(\pi^*B - (\pi^*B)_{\mathrm{red}}),$$ where the sum is over all $\GG$-invariant prime divisors $B$ on $S$ contained in the critical locus of $\pi$. Since $\GG$ is transcendental, the canonical class $K_{\GG}$ is pseudoeffective (see e.g. \cite[Theorem 7.1]{Bru15}). By \cite[Theorem 1.3]{CP19}, the canonical class $K_{X/S} - R(\pi)$ of the algebraic part of $\FF$ is pseudoeffective as well. Moreover, by our assumption on $\FF$ we have the inequalities $$\nu(K_{X/S} - R(\pi)) \leqslant \nu(K_{X/S} - R(\pi)^{\inv}) \leqslant \nu(K_{\FF}) \leqslant 2.$$

On the other hand, we apply Construction \ref{CKT} to obtain a diagram as in \eqref{semred}:
\begin{equation}
\xymatrix{
\overline{X} \ar[r]^{\overline{\pi}} \ar[d]_{f} & \overline{S} \ar[d]^{g}\\
X \ar[r]^{\pi} & S
}
\end{equation}
and induced foliations $\overline{\FF}$ and $\overline{\GG}$ on $\overline{X}$ and $\overline{S}$, respectively.
We can use this diagram to estimate the variation of the fibration $\pi$ from above. Indeed, by Corollary \ref{cor:Kaw2} we obtain $$\nu(K_{X/S} - R(\pi)) = \nu(K_{\overline{X}^{\circ}/\overline{S}^{\circ}}) \geqslant \kappa(K_{\overline{X}^{\circ}/\overline{S}^{\circ}}) = \kappa(K_{\overline{X}/\overline{S}}) \geqslant \kappa(K_{F}) + \Var(\overline{\pi}) = 1 + \Var(\pi),$$ since a general fiber $F$ of $\pi$ is a curve of general type. Thus we are left with two possibilities: either $\Var(\pi) = 0$ or $\Var(\pi) = 1$.

\medskip

Moreover, we can derive a bound on the numerical dimension of $K_{\GG}$. Namely, we have $$f^*K_{\FF} = f^*(\pi^*K_{\GG} + K_{X/S} - R(\pi)^{\inv}) \geqslant f^*(\pi^*K_{\GG} + K_{X/S} - R(\pi)) = K_{\overline{X}/\overline{S}} + (g \circ \overline{\pi})^*K_{\GG}.$$ We can then apply part (2) of Theorem \ref{Kaw1} to the fibration $\overline{\pi}$, taking $L = g^*\OO_{S}(K_{\GG})$, and obtain $$2 \geqslant \kappa(f^*K_{\FF}) \geqslant \kappa((g \circ \overline{\pi})^*K_{\GG} + K_{\overline{X}/\overline{S}}) \geqslant 1 + \max\{\Var(\pi), \kappa(g^*K_{\GG})\},$$ provided that $\kappa(K_{\GG}) \geqslant 0$. Therefore, the foliation $\GG$ is not of general type. To complete the proof, we need to sort out the following cases: \begin{itemize} \item $\nu(K_{\GG}) = 1$ and $\Var(\pi) = 0$; \item $\nu(K_{\GG}) = 1$ and $\Var(\pi) = 1$; \item $\nu(K_{\GG}) = 0$ and $\Var(\pi) \leqslant 1$. \end{itemize}
For an example of a foliation with $\nu(K_{\GG}) = 1$ and $\Var(\pi) = 0$ take $X = S \times C$ where $C$ is a smooth curve of genus at least 2, $S$ is a Hilbert modular surface and $\GG$ is a Hilbert modular foliation. Define the foliation $\FF$ to be the pullback of $\GG$ by the projection $S\times C \to S$. Then $\nu(K_{\FF}) = \nu(K_{\GG} \boxtimes K_C) = 2$. 

In the remaining part of the proof we show that the other two cases do not occur.

\medskip

If $\nu(K_{\GG}) = 0$ then by the classification theorem of McQuillan \cite[Theorem 2 IV.3.6]{McQ08} (see also \cite[Theorem 8.2]{Bru15}), there exists a finite morphism $g \colon \overline{S} \to S$ and a birational contraction $\varphi \colon \overline{S} \to S'$ such that the foliation $\GG'$ on $S'$ is given by a holomorphic vector field with isolated zeroes. Moreover, by {\em loc. cit.} (see also \cite[Propositions 6.5 and 6.6]{Bru15}) we have one of the following possibilities:
\begin{enumerate}
\item $(S', \GG')$ is isomorphic to a quotient $(E \times C)/G$ where $E$ is an elliptic curve, $C$ is a curve of genus at least 2, with a foliation $\GG$ induced by the $G$-invariant fibration $C \times E \to C$;
\item $(S', \GG')$ is isomorphic to abelian surface with a linear foliation;
\item $(S', \GG')$ is isomorphic to a smooth $\PP^1$-bundle over an elliptic curve $E$ with a Riccati foliation;
\item $(S', \GG')$ is isomorphic to a $\PP^1$-bundle over $\PP^1$ with a Riccati foliation.
\end{enumerate}

As $\GG$ is transcendental, the case (1) does not occur. In each of the three remaining cases there are only finitely many $\GG'$-invariant curves $B_1, \ldots, B_k$ on $S'$. 

Now consider the fibration $\pi \colon X \to S$ and a foliation $\GG$ such that $\nu(K_{\GG}) = 0$. We have the finite morphism $g \colon \overline{S} \to S$ such that $g^*K_{\GG} = K_{\overline{\GG}}$ by \cite[Theorem 8.2]{Bru15}. Thus we can consider a generically finite base change $\overline{\pi} \colon \overline{X} \to \overline{S}$ such that the $K_{\overline{\GG}}$ has a holomorphic section (possibly with zeroes) and $K_{\overline{\FF}}$ is not big by Proposition \ref{hurwitz}. Let also $\varphi \colon \overline{S} \to S'$ be a birational contraction such that $K_{\GG'} = \OO_{S'}$. Then $(S', \GG')$ belongs to one of the types (2) -- (4) of the above classification. In particular (see \cite[Theorem 2 IV.3.6]{McQ08}), the log pair $(S, B')$ where $B' = \sum B_i$ is an equivariant compactification of a connected algebraic group $G$ and the foliation $\GG'$ is induced by a one-parameter subgroup of $G$. It follows (see e.g. \cite[3.1]{BZ17}) that the line bundle $$\det(T_{S'}(-\log B')) = -(K_{S'} + B')$$ is effective. Hence, the line bundle $-(K_{\overline{S}} + B) = -\varphi^*(K_{S'} + B')$ is effective as well.

We can express the canonical class of $\overline{X}$ as $$K_{\overline{X}} = (K_{\overline{X}/\overline{S}} - \overline{\pi}^*B) + \overline{\pi}^*(K_{\overline{S}} + B).$$ 
Here $K_{\overline{X}}$ is big, whereas $$\nu(K_{\overline{X}/\overline{S}} - \overline{\pi}^*B) \leqslant \nu(K_{\overline{\FF}}) = \nu(K_{\FF}) = \nu(K_{X/S} - R(\pi)^{\inv}) \leqslant 2.$$ Since $-(K_{\overline{S}} + B)$ is effective, the class at the right hand side of the formula is not big. This is a contradiction, so the case $\nu(K_{\GG}) = 0$ is impossible.

\medskip

Next, we need to exclude the case $\nu(K_{\GG}) = 1$ and $\Var(\pi) = 1$. In this case we have $$\nu(K_{X/S} - R(\pi)^{\inv}) = 2.$$

Then using superadditivity of the restricted positive product from Theorem \ref{posprod} we obtain 
\begin{multline}\label{zerovol}
0 = \mathrm{vol}(K_{\FF}) = \langle K_{\FF}\rangle^3 = \langle K_{X/S} - R(\pi)^{\inv} + \pi^*K_{\GG}\rangle^3 \geqslant \\ \geqslant \langle\pi^*K_{\GG}\rangle^3 + 3\langle\pi^*K_{\GG}\cdot (K_{X/S} - R(\pi)^{\inv}) \cdot (K_{X/S} - R(\pi)^{\inv})\rangle +3\langle\pi^*K_{\GG}^2\cdot (K_{X/S} - R(\pi)^{\inv})\rangle +\langle K_{X/S} - R(\pi)^{\inv}\rangle^3 = \\  = 0 + 0 + 0 + 3\langle\pi^*K_{\GG}\cdot (K_{X/S} - R(\pi)^{\inv}) \cdot (K_{X/S} - R(\pi)^{\inv})\rangle.
\end{multline} 

By the results of \cite{dJ97} (see also \cite{AK00}), we can construct a generically finite base change 
\begin{equation}
\xymatrix{
\overline{X} \ar[r]^{\overline{\pi}} \ar[d]_{f} & \overline{S} \ar[d]^{h}\\
X \ar[r]^{\pi} & S
}
\end{equation}

where the morphism $\overline{\pi} \colon \overline{X} \to \overline{S}$ is a family of semistable curves in codimension 1. Then by \cite[Theorem 9.31]{Vie95} we have a moduli map $\mu \colon \overline{S} \to \overline{\mathcal{M}_g}$ such that $\dim(\mu(\overline{S})) = \Var(\pi) = 1$ (see Remark \ref{Var}). Moreover, for any curve $C \subset \overline{S}$ the pullback to $C$ of the polarization on $\overline{\mathcal{M}_g}$ is given by the restriction of $\det(\overline{\pi}_*\OO_X(mK_{\overline{X}/\overline{S}}))$ for some $m > 0$. We have the following equality of movable intersection numbers \begin{multline} \langle\pi^*K_{\GG}\cdot (K_{X/S} - R(\pi)^{\inv}) \cdot (K_{X/S} - R(\pi)^{\inv})\rangle = \langle(\pi \circ f)^*K_{\GG}\cdot f^*(K_{X/S} - R(\pi)^{\inv}) \cdot f^*(K_{X/S} - R(\pi)^{\inv})\rangle = \\ = \langle(h \circ \overline{\pi})^*K_{\GG} \cdot K_{\overline{X}/\overline{S}} \cdot K_{\overline{X}/\overline{S}}\rangle,\end{multline} see e. g. the argument in \cite[Proposition 3.3]{LX17}. Now using the assumptions $\Var(\pi) = 1$ and $\nu(K_{\GG}) = 1$ we will estimate the movable intersection number $$\langle(h \circ \overline{\pi})^*K_{\GG} \cdot K_{\overline{X}/\overline{S}} \cdot K_{\overline{X}/\overline{S}}\rangle$$ from below and arrive to a contradiction with equality \eqref{zerovol}. By \cite[Theorem I]{Sch12} there exists a singular metric on $K_{\overline{X}/\overline{S}}$ such that \begin{itemize} \item its curvature current $\omega_{\overline{X}/\overline{S}}$ has analytic singularities contained in the singular locus $\mathrm{Sing}(\overline{\pi})$; \item the absolutely continuous part $(\omega_{\overline{X}/\overline{S}})_{ac}$ in the Lebesgue decomposition of $\omega_{\overline{X}/\overline{S}}$ (see e. g. \cite[Section 2.3]{Bou02}) is smooth on $X^{\circ} = \overline{X}\setminus \mathrm{Sing}(\overline{\pi})$; \item the integral of $(\omega_{\overline{X}/\overline{S}})_{ac}$ along the fibers of $\overline{\pi}|_{X^{\circ}}$ is equal to the (generalized) Weil--Petersson form $\omega_{WP}$ on the base of the family. \end{itemize} We also consider the nef class $P_{\GG}$ which is the positive part in the Zariski decomposition of $K_{\GG}$ and a closed positive current $\omega_{\GG}$ in the class $P_{\GG}$.

Let $U$ be a Zariski-open subset of $\overline{S}$ and denote $X^{\circ} = \overline{\pi}^{-1}(U)$. We may choose $U$ in such a way that \begin{itemize} \item $\overline{\pi}|_{X^{\circ}} \colon X^{\circ} \to U$ is a smooth family of curves of genus $g \geqslant 2$; \item the restriction of $h$ to $U$ is finite and unramified. \end{itemize} Then from the definition of the movable product and from the choice of $U$ we obtain

\begin{equation} \label{zeroeq}
0 = \langle(h \circ \overline{\pi})^*K_{\GG} \cdot K_{\overline{X}/\overline{S}} \cdot K_{\overline{X}/\overline{S}}\rangle \geqslant  \int_{X^{\circ}} ((h \circ \overline{\pi})^*\omega_{\GG})_{ac} \wedge (\omega_{\overline{X}/\overline{S}})_{ac} \wedge (\omega_{\overline{X}/\overline{S}})_{ac} \geqslant \int_U \omega_{WP} \wedge (h^*\omega_{\GG})_{ac}.
\end{equation}

Let us denote by $C$ a general fiber of the moduli map. If $h^*P_{\GG}\cdot C > 0$ then the latter integral in \eqref{zeroeq} is positive by integration along the fibers of $\mu|_{U}$. So we need to exclude the case when $h^*P_{\GG}\cdot C = 0$. In that case there exists a curve $F = h_*C$ on $S$ such that deformations of $F$ cover $S$ and $P_{\GG}\cdot F = 0$. By our assumption, we have $\nu(K_{\GG}) = 1$ and $\GG$ is transcendental, so by \cite[Theorem 9.1]{Bru15} the foliation $\GG$ is birational to a Riccati or a turbulent foliation and $F$ is a general fiber of the associated Iitaka fibration.

On the other hand, we have $$K_X = (K_{X/S} - \pi^*B) + \pi^*(K_S + B),$$ where $B = \sum_iB_i$ is a sum of $\GG$-invariant curves in the branch locus of $\pi$. Let $F$ is a general fiber of the Iitaka fibration of $K_{\GG}$. Then for every $m > 0$ we have the equality $$c_1(\pi_*\OO_X(mK_X)) \cdot F = c_1(\pi_*\OO_X(m(K_{X/S} -\pi^*B))) \cdot F + c_1(\OO_S(K_S + B)) \cdot F.$$ Since $K_X$ is big and $F$ is general, we have $c_1(\pi_*\OO_X(mK_X)) \cdot F > 0$. We also have $K_{\GG} \cdot F = 0$ and $N^*_{\GG} \cdot F = \deg(K_F)$ by definition. Moreover, the intersection $B\cdot F$ is equal to zero for $\GG$ a turbulent foliation (all $\GG$-invariant curves are contained in fibers) and does not exceed 2 for $\GG$ a Riccati foliation ($\GG$ has at most 2 invariant sections, up to linear equivalence). Therefore we have $$(K_S + B)\cdot F = (K_{\GG} + N^*_{\GG} + B) \cdot F \leqslant 0.$$Thus we obtain $$c_1(\pi_*\OO_X(m(K_{X/S} -\pi^*B))) \cdot F = \det(\pi_*\OO_X(m(K_{X/S} -\pi^*B))) \cdot F > 0.$$ Pulling back to $\overline{S}$ and restricting to $U$ we obtain $$\int_{C \cap U}\omega_{WP} = \det(\overline{\pi}_*\OO_X(mK_{\overline{X}/\overline{S}}))\cdot C \geqslant \det(\pi_*\OO_X(m(K_{X/S} -\pi^*B))) \cdot F > 0$$ which contradicts equality \eqref{zeroeq}. Therefore the case $\Var(\pi) = 1$ and $\nu(K_{\GG}) = 1$ is also impossible. 

\medskip

Finally, suppose that $\GG$ is abundant, that is, $\kappa(K_{\GG}) = \nu(K_{\GG}) = 1$. Then by Theorem \ref{Kaw1} we have $$2 \geqslant \kappa(K_{\FF}) = \kappa((g \circ \overline{\pi})^*K_{\GG} + K_{\overline{X}/\overline{S}}) \geqslant \kappa(F) + \kappa(K_{\GG}) = 2$$ so $\FF$ is abundant as well. Therefore $\FF$ can be non-abundant only if $\GG$ is non-abundant. By \cite[Proposition 9.2]{Bru15} the foliation $\GG$ is birational to a Hilbert modular foliation. The theorem is proved.
\end{proof}

\begin{remark} In Theorem \ref{MainThm} we obtained that the fibration $\pi \colon X' \to S$ is birationally isotrivial, hence there exists a generically finite morphism $\overline{X} \to X'$ such that $\overline{X}$ is birational to a product $C \times \overline{S}$. However, unlike in Proposition \ref{AlgInt}, the morphism $\pi$ can be ramified over non-$\GG$-invariant divisors. Thus by Proposition \ref{hurwitz} the induced foliations on $\overline{X}$ and $\overline{S}$ can be of general type.
\end{remark}

\section{The case of purely transcendental foliations} \label{main2}  In this section we consider purely transcendental foliations on smooth projective varieties of general type. Assuming that the foliation is non-singular in codimension $2$, we can obtain the following description for these foliations.

\begin{theorem} \label{terminal} Let $X$ be a smooth projective manifold of general type, $\dim(X) = n \geqslant 2$. Let $\FF$ be a codimension 1 foliation on $X$. Suppose that \begin{enumerate} 
\item $K_{\FF}$ is not big;
\item $\FF$ is purely transcendental;
\item $\mathrm{codim}_X\Sing(\FF) \geqslant 3$.
\end{enumerate}

Then the foliation $\FF$ is induced by a Hilbert modular foliation via a morphism $X \to M_H$, generically finite onto its image. 
\end{theorem}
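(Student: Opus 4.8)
The plan is to imitate Brunella's argument for Theorem \ref{brureg} by restricting to general hyperplane sections, and then to invoke Touzet's classification. Concretely, choose a very ample $A$ on $X$ and let $D = D_1 \cap \cdots \cap D_{n-2}$ be a general complete intersection surface cut out by elements of $|A|$; by repeated application of Proposition \ref{adj} together with the Lefschetz-type theorems, $\FF$ induces a codimension $1$ foliation $\FF|_D$ on $D$ with $N^*_{\FF|_D} = (N^*_{\FF})|_D$ and $K_{\FF|_D} = (K_{\FF} + (n-2)A)|_D$. Since $\mathrm{codim}_X\Sing(\FF)\geqslant 3$, for $D$ general $\FF|_D$ is a \emph{regular} foliation on the smooth surface $D$, and $D$ is of general type (being a general high-degree complete intersection in a manifold of general type, or after replacing $A$ by a large multiple). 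Brunella's Theorem \ref{brureg} then gives that $N^*_{\FF|_D}$, i.e. $(N^*_{\FF})|_D$, is numerically effective — in particular pseudoeffective — on $D$.

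Next I would \emph{lift} this positivity from $D$ to $X$. The first goal is to show $N^*_{\FF}$ is pseudoeffective on $X$. One route: apply the Baum--Bott formula. Since $\mathrm{codim}_X\Sing(\FF)\geqslant 3$ there are no codimension $2$ components of $\Sing(\FF)$, so the Baum--Bott formula forces $c_1^2(N_{\FF}) = 0$ in $H^4(X,\mathbb{C})$; restricting to the surface $D$ we also get $c_1(N^*_{\FF})^2 \cdot A^{n-2} = 0$, i.e. the nef class $N^*_{\FF|_D}$ has numerical dimension $\leqslant 1$ on $D$. Combined with the relation $K_X = K_{\FF} + \det N^*_{\FF}$ and the hypothesis that $K_{\FF}$ is not big while $K_X$ is big, one pushes $\det N^*_{\FF} = N^*_{\FF}$ to be big, or at least of positive numerical dimension, on $X$; more precisely I expect to show $N^*_{\FF}$ is pseudoeffective on $X$ with $\nu(N^*_{\FF}) \geqslant 1$, using $\nu(K_X) = n$, $\nu(K_{\FF}) \leqslant n-1$, superadditivity of $\nu$ (Proposition \ref{Numprop}(3)), and the vanishing $c_1^2(N_{\FF})=0$ to rule out $\nu(N^*_\FF) = n$ unless forced. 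The key point is that $N^*_{\FF}$ is pseudoeffective and its restriction to a general surface section is nef of numerical dimension exactly $1$.

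With $N^*_{\FF}$ pseudoeffective (indeed numerically trivial in codimension $1$, after the Zariski decomposition of Theorem \ref{posprod}), the foliation $\FF$ has pseudoeffective conormal bundle, purely transcendental, with singular locus of codimension $\geqslant 3$. This is exactly the input for Touzet's structure theorems \cite{Tou13, Tou16}: a codimension $1$ foliation with pseudoeffective conormal bundle which is not a pullback from a curve (equivalently, does not have a rational or fibered first integral of the relevant type) and whose monodromy/transverse structure is of the appropriate shape must be, up to generically finite pullback, a pullback of one of an explicit list — torus quotients, Riccati/turbulent foliations, or Hilbert modular foliations on $M_H$. Our hypotheses eliminate all cases but the Hilbert modular one: pure transcendence rules out the algebraically integrable and fibered cases; the bigness of $K_X$ (hence $X$ not covered by the torus or product situations) rules out the quotient-of-torus and the isotrivial-fibration cases; and the numerical dimension constraint $\nu(K_{\FF}) < n$ matches the Hilbert modular case, where $\nu(K_{\FF}) = n-1$. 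One concludes that $\FF$ is induced by a Hilbert modular foliation on $M_H$ via a morphism $X \to M_H$ generically finite onto its image.

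The main obstacle I anticipate is the lifting step: going from ``$(N^*_{\FF})|_D$ nef of numerical dimension $1$ on a general surface section $D$'' to a usable global positivity statement for $N^*_{\FF}$ on $X$. Lefschetz hyperplane theorems control $H^2$ and the Picard/Néron--Severi groups for $n \geqslant 3$, so numerical classes are detected on $D$; the delicate part is pseudoeffectivity (an effectivity/closed-cone statement, not just a numerical one) and ensuring the restriction map on pseudoeffective cones is well-behaved, for which one typically needs $D$ of sufficiently high degree and an argument à la Boucksom--Demailly--Paun (Theorem \ref{posprod}) comparing $\langle N^*_{\FF}\rangle$ with its restriction. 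Once this is in place, checking that Touzet's dichotomy applies and that every branch except the Hilbert modular one is excluded by our three hypotheses is the conceptually clean part of the argument.
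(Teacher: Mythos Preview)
Your broad strategy --- restrict to a complete intersection surface $D$, obtain positivity of the conormal there, and feed this into Touzet's classification --- is exactly the paper's. But the first step contains a genuine error: the restricted foliation $\FF|_D$ is \emph{not} regular in general, even though $D\cap\Sing(\FF)=\emptyset$. The singularities of $\FF|_D$ are the zeros of $\omega_{\FF}|_D\in H^0(D,\Omega^1_D\otimes N_{\FF}|_D)$, and for a general surface section these occur precisely at the (finitely many, but nonempty) points where $T_pD\subset\FF_p$, i.e.\ where $D$ is tangent to a leaf. So Brunella's Theorem~\ref{brureg} for regular foliations does not apply to $\FF|_D$, and your shortcut to nefness of $N^*_{\FF}|_D$ collapses. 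The paper gets around this by not invoking regularity of $\FF|_D$ at all: it uses Baum--Bott on $X$ (where $\mathrm{codim}\,\Sing(\FF)\geqslant 3$ does hold) to obtain $c_1(N_{\FF})^2=0$ in $H^4(X,\mathbb C)$, hence $(N^*_{\FF}|_D)^2=0$, and then runs a Riemann--Roch argument on $D$ with an $\varepsilon$-perturbation by an ample class $H_1$. To make that Riemann--Roch argument work (positivity of the linear term and vanishing of $h^2$) one needs $N^*_{\FF}\cdot H_1\cdots H_{n-1}>0$, which is why the paper first passes via BDPP to a birational model $\widetilde X$ on which such an $H_1$ exists; this is the substitute for the regularity you assumed.

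Your second, self-identified obstacle --- lifting pseudoeffectivity of $N^*_{\FF}$ from $D$ to $X$ before applying Touzet --- is also real, and the paper simply sidesteps it. It applies Touzet's trichotomy directly on $D$ (which only needs $N^*_{\widetilde\FF}|_D$ pseudoeffective, not the foliation regular), eliminates the cases $\nu=0$ and $\kappa=1$ using Lefschetz for $H^2$ and the algebraic-rank argument, and in the Hilbert modular case lifts the map $D\to M_H$ to $\widetilde X\to M_H$ via the Lefschetz theorem for $\pi_1$ of quasi-projective varieties applied to the monodromy representation. Only after this does one read off that $N^*_{\FF}$ is pseudoeffective on $X$; it is an output, not an input.
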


To obtain the conclusion of Theorem \ref{terminal} we show that a foliation $\FF$ satisfying the assumptions of the theorem belongs to the class of foliations with {\em pseudoeffective} and {\em non-abundant} conormal bundle, studied by Touzet in \cite{Tou13, Tou16}. He proved the following structure theorem for foliations from this class (see \cite[Theorem 1]{Tou16}).

\begin{theorem}\label{touzet} Let $\FF$ be a codimension 1 foliation on a smooth projective variety $X$. Suppose that the conormal bundle $N^*_{\FF}$ is pseudoeffective and that $\kappa(N^*_{\FF}) = - \infty$. Then there exists an analytic morphism $\Psi \colon X \to M_H$ to a Hilbert modular variety $M_H \simeq \mathbb{D}^N/\Gamma$ (here $N \geqslant 2$) such that $\FF = \Psi^{-1}\GG$ for $\GG$ being one of the Hilbert modular foliations on $M_H$.
\end{theorem}

To obtain the morphism $\Psi$ to a Hilbert modular variety Touzet uses the following theorem of Corlette and Simpson (see \cite[Theorem 2]{CS08}).

\begin{theorem}\label{corsim} Let $U = X\setminus D$ be a smooth quasiprojective variety and let $\rho \colon \pi_1(U, x) \to \mathrm{SL}_2(\mathbb{C})$ be a representation with Zariski-dense image. Suppose that the monodromy transformations around the components of $D$ are quasi-unipotent. Then either $\rho$ comes from a map $f \colon U \to C$ to a Deligne--Mumford curve or $\rho$ is a pullback of one of the tautological representations by a map $f \colon U \to M_H$ to a Hilbert modular variety.
\end{theorem}

Before we proceed to the proof of Theorem \ref{terminal}, we state and prove a technical lemma.

\begin{lemma}\label{restcurves} Let $\FF$ be a codimension 1 purely transcendental foliation on a smooth projective variety $X$. Let $H \subset X$ be a very general hyperplane section. If $C \subset H$ is a compact connected curve tangent to $\FF$, then the leaf of $\FF$ containing the curve $C$ is algebraic.
\end{lemma}

\begin{proof} Since $\FF$ is purely transcendental, for any $\FF$-invariant compact curve $C \subset X$ its $\FF$-invariant deformations inside $X$ cover a proper subset $W_C$ of $X$. Moreover, $\FF$-invariant curves $C$ not contained in any $\FF$-invariant hypersurface are parameterized by at most countably many components in the Hilbert scheme. Thus we can choose a very general hyperplane section $H$ which does not contain any of these curves. 
\end{proof}

Now we can prove Theorem \ref{terminal}.

\begin{proof}[Proof of Theorem \ref{terminal}] If $\dim(X) = 2$ then by assumption $\FF$ is regular, therefore canonical (\cite[Lemma 3.10]{AD13}). The statement then follows from Theorem \ref{BruMcQ} with the morphism $X \to M_H$ being the minimal model of $\FF$. 

Suppose from now on that $\dim(X) = n \geqslant 3$. 

\emph{Step 1: Find a suitable complete intersection surface.} By our assumptions, $K_X$ is big and $K_{\FF}$ is not big. From Proposition \ref{Numprop} and from the formula $$K_{\FF} = K_X + N_{\FF}$$ we obtain that the normal bundle $N_{\FF}$ is not pseudoeffective. Since $X$ is smooth, by \cite[Theorem ~1.5]{BDPP13} this is equivalent to the following condition: there exists a birational model $\varphi \colon X' \to X$ and a complete intersection class $\alpha = H_1 \cap \cdots \cap H_{n-1}$ on $X'$ such that \begin{equation} N_{\FF} \cdot \varphi_*(H_1 \cap \cdots \cap H_{n-1}) =  \varphi^*N_{\FF} \cdot H_1 \cdots H_{n-1} < 0. \label{norm}\end{equation} By Bertini's theorem we can take the above ample divisors $H_2, \ldots, H_{n-1}$ and large multiples $m_i$ for $i \in \{2, \ldots, n-1\}$ such that a general element $$D \in |m_2H_2 \cap \cdots \cap m_{n-1}H_{n-1}|$$ is a smooth surface. Moreover, by Lemma \ref{restcurves} every compact curve $C \subset D$, invariant under $\FF'_D$, is an intersection $C = D \cap E$ for an $\FF'$-invariant hypersurface $E \subset X'$. Note that Lemma \ref{restcurves} implies that the number of $\FF'_D$-invariant curves is finite. Indeed, if the number of $\FF'_D$-invariant curves is infinite, then the number of $\FF'$-invariant surfaces $E \subset X'$ is infinite as well. Therefore by Jouanolou's theorem \cite{Jou78} the foliation $\FF'$ has to be algebraically integrable, which is not the case.

\emph{Step 2: Prove pseudoeffectivity of $\varphi^*N^*_{\FF}$ on $D$.} Recall from Remark \ref{pullback} that we have the relation $$N^*_{\FF'} = \varphi^*N^*_{\FF} + E$$ where $E$ is an effective $\varphi$-exceptional divisor. Therefore, if $(\varphi^*N^*_{\FF})|_D$ is pseudoeffective then the same is true for $N^*_{\FF'_D}$. Let us consider the following $\QQ$-divisors on $X'$: $$L_{\varepsilon} := \varphi^*N^*_{\FF} + \varepsilon H_1, \quad \varepsilon \in \QQ_{>0}.$$ By the Riemann--Roch theorem we obtain $$h^0(D, mL_{\varepsilon}|_D) + h^2(D, mL_{\varepsilon}|_D) \geqslant C_{\varepsilon}\cdot(L_{\varepsilon}|_D)^2m^2$$ for a positive constant $C_{\varepsilon}$ and $m$ large and sufficiently divisible. The intersection number is equal to \begin{multline} (L_{\varepsilon}|_D)^2 = (\varphi^* N^*_{\FF}|_D + \varepsilon H_1|_D)^2 = ((\varphi^* N^*_{\FF})^2 + 2\varepsilon \varphi^*  N^*_{\FF} \cdot H_1 + \varepsilon^2 H_1^2)\cdot D = \\ = (\varphi^* N^*_{\FF})^2\cdot m_2 H_2 \cdots m_{n-1} H_{n-1} + 2\varepsilon \varphi^* N^*_{\FF} \cdot H_1 \cdot m_2H_2 \cdots m_{n-1}H_{n-1} + \varepsilon^2 H_1 \cdot m_2 H_2 \cdots m_{n-1} H_{n-1}.\end{multline} The second summand in the above equality is positive by the condition \eqref{norm}. The third one is also positive, since $H_i$ are ample divisors. As for the first summand, by the projection formula we have $$(\varphi^*N^*_{\FF})^2 \cdot H_2 \cdots H_{n-1} = N^*_{\FF} \cdot \varphi_*(\varphi^*N^*_{\FF} \cdot H_2 \cdots H_{n-1}) = N^2_{\FF} \cdot \varphi_*(H_2 \cap \cdots \cap H_{n-1}).$$ By our assumption $\mathrm{codim}_X\Sing(\FF) \geqslant 3$ and by the Baum--Bott formula we have 
\begin{equation}N^2_{\FF} \cdot \varphi_*(H_2 \cap \cdots \cap H_{n-1}) = 0 \quad \mbox{since} \quad N^2_{\FF} \equiv 0.\label{BBott}\end{equation} Thus $(L_{\varepsilon}|_D)^2 > 0$ for all $\varepsilon \in \QQ_{>0}$. Moreover, by Serre duality and by the condition \eqref{norm} we have $$h^2(D, mL_{\varepsilon}|_D) = h^0(D, K_D + m\varphi^*N_{\FF} - m\varepsilon H_1) = 0$$ for $m$ large enough. Therefore for every $\varepsilon \in \QQ_{>0}$ we obtain $$h^0(D, mL_{\varepsilon}|_D) > C \cdot (L_{\varepsilon}|_D)^2 m^2 > 0$$ for $m$ large and sufficiently divisible (depending on $\varepsilon$). So the class of $L_0|_D = \varphi^*N^*_{\FF}|_D$ is a limit of classes of $\QQ$-effective divisors, hence it is pseudoeffective. Then the conormal line bundle  $$N^*_{\FF'_D} = N^*_{\FF'}|_D = (\varphi^* N^*_{\FF} + E)|_D$$ is pseudoeffective as well.

\emph{Step 3: Case-by-case analysis.}
By the classification result of Touzet \cite[Proposition 2.14]{Tou13}, we have three possibilities for the numerical and Kodaira dimensions of $N_{\FF'}^*|_D$: \begin{enumerate}
\item $\nu(N_{\FF'}^*|_D) = 0$;
\item $\nu(N_{\FF'}^*|_D) = 1, \kappa(N_{\FF'}^*|_D) = 1$;
\item $\nu(N_{\FF'}^*|_D) = 1, \kappa(N_{\FF'}^*|_D) = -\infty$.
\end{enumerate}

We consider these three cases separately. 

\textit{Case $\nu(N_{\FF'}^*|_D) = 0$}. Since $\varphi^*N^*_{\FF}|_D$ is pseudoeffective and $N^*_{\FF'}|_D = (\varphi^*N^*_{\FF} + E)|_D$, it follows that $\nu(\varphi^*N^*_{\FF}|_D) = 0$. We consider the Zariski decomposition $(\varphi^*N_{\FF}^*)|_D = L + \sum_ia_iC_i$, where $L$ is numerically trivial and $C_i$ are exceptional curves on $D$. We obtain \begin{equation}\begin{split}(\varphi^*N^*_{\FF}|_D)^2 &= (\varphi^*N^*_{\FF})^2\cdot m_2H_2 \cdots m_{n-1}H_{n-1} = 0 \qquad \mbox{(by the equality  \eqref{BBott})} \\ &= L^2 + (\sum a_iC_i)^2 = (\sum a_iC_i)^2.\end{split}\end{equation} Since the Gram matrix of $\{C_i\}$ is negative definite, we obtain that $C_i = 0$, so that $\varphi^*N^*_{\FF}|_D = L \equiv 0$. By the Lefschetz hyperplane section theorem the map $i^* \colon H^2(\widetilde X, \mathbb{C}) \to H^2(D, \mathbb{C})$ is injective, therefore we have $\varphi^*N^*_{\FF} \equiv 0$ on $X'$. However, this implies $\nu(\varphi^*K_{X}) = \nu(\varphi^*K_{\FF}) = n$, which contradicts our assumptions. 

\textit{Case $\nu(N_{\FF'}^*|_D) = 1, \kappa(N_{\FF'}^*|_D) = 1$}. In this case we apply a result of Bogomolov \cite[Lemma 12.4]{Bog79} 
and obtain that $\FF'_D$ is algebraically integrable. However, the algebraic reduction of $\FF'_D$ is induced by that of $\FF'$ (see \cite[Lemma 4]{PS20}), so the algebraic rank of $\FF'$ has to be positive, which is not the case by assumption.

\textit{Case $\nu(N_{\FF'}^*|_D) = 1, \kappa(N_{\FF'}^*|_D) = -\infty$}. By Theorem \ref{touzet} there exists a map $\Psi_D \colon D \to M_H$, where $M_H = \mathbb{D}^N/\Gamma$ is a Hilbert modular variety, and $\FF'_D$ is the pullback via $\Psi_D$ of one of the Hilbert modular foliations on $M_H$. The map $\Psi_D$ is constructed from a representation $$\rho_D \colon \pi_1(D\setminus \mathrm{Supp}(N_D)) \to SL_2(\mathbb{C})$$ with quasi-unipotent monodromy around the components of $N_D$ using Theorem \ref{corsim}. Here $N_D$ is the negative part in the divisorial Zariski decomposition of $N^*_{\FF'_{D}}$. By Lemma \ref{restcurves} we have $N_D = N_{X'}|_D$ where $N_{X'}$ is a divisor on $X'$ with $\mathrm{Supp}(N_{X'})$ being $\FF'$-invariant. Applying the Lefschetz hyperplane section theorem for quasi-projective varieties \cite[Theorem 1.1.1]{HL85}, we obtain that $$\pi_1(D\setminus(\mathrm{Supp}(N_D))) \simeq \pi_1(X'\setminus\mathrm{Supp}(N_{X'})).$$ Therefore we obtain a representation $$\rho_{X'} \colon \pi_1(X'\setminus\mathrm{Supp}(N_{X'}))  \to \mathrm{SL}_2(\mathbb{C})$$ with quasi-unipotent monodromy around the components of $N_{X'}$. Moreover, $\rho_{X'}$ does not come from a map to a Deligne--Mumford curve since $\rho_D$ does not by \cite[Theorem 6.4]{Tou16}.

Therefore we can apply Theorem \ref{corsim} and the argument from \cite[p. 182]{Tou16} to obtain a map $\Psi_{X'} \colon X' \to M_H$ extending the map $\Psi_D \colon D \to M_H$ and such that $\FF'$ is induced via $\Psi_{X'}$ by one of the Hilbert modular foliations on $M_H$. In particular, the conormal bundle $N^*_{\FF'}$ is pseudoeffective. The map $\Psi_{X'}$ has to be generically finite onto its image, since $\FF'$ is purely transcendental. 

Finally, we have the equality $N^*_{\FF} = \varphi_*N^*_{\widetilde\FF}$, thus the conormal bundle $N^*_{\FF}$ is pseudoeffective. Again, by \cite[Proposition 2.14]{Tou13} one of the following three cases occurs: \begin{enumerate}
\item $\nu(N^*_{\FF}) = 0$;
\item $\nu(N^*_{\FF}) = 1, \kappa(N^*_{\FF}) = 1$;
\item $\nu(N^*_{\FF}) = 1, \kappa(N^*_{\FF}) = -\infty$.
\end{enumerate} Suppose that we have $\nu(N^*_{\FF}) = 0$. Since $\FF$ is non-singular in codimension 2, by the Baum--Bott formula (Proposition \ref{BaumBott}) we obtain that $N^*_{\FF} \equiv 0$. If $\nu(N^*_{\FF}) = \kappa(N^*_{\FF}) = 1$ then by a theorem of Bogomolov \cite[Lemma 12.4]{Bog79} the foliation $\FF$ has to be algebraically integrable, which is not the case by assumption. Therefore the only possible case is $\nu(N^*_{\FF}) = 1$ and $\kappa(N^*_{\FF}) = -\infty$. Applying \cite[Theorem 1]{Tou16} to the foliation $\FF$, we obtain the desired conclusion.
\end{proof}

Finally, we list some questions which are natural to ask in view of our results.

\begin{question} Let $\FF$ be a purely transcendental foliation of codimension $1$ on a threefold $X$ of general type. Suppose that $\FF$ has canonical singularities and $\nu(K_{\FF}) < 3$. Is the conormal (or log conormal, for some $\FF$-invariant boundary) bundle always pseudoeffective in this case? The logarithmic version of Touzet's theorem \cite[Theorem 2]{Tou16} gives an affirmative answer to this question for singular Hilbert modular foliations.
\end{question}

\begin{question} Let $\FF$ a foliation be as in the previous question. Can the numerical dimension be equal to $1$? More generally, if $X$ is an $n$-dimensional variety of general type, what is the minimal numerical dimension of a foliation $\FF$ on $X$?
\end{question}

\begin{question} Let $\FF$ be a codimension $1$ foliation with canonical singularities on a projective threefold. Suppose that $\kappa(K_{\FF}) \geqslant 0$; does it follow that $\kappa(K_{\FF}) = \nu(K_{\FF})$?
\end{question}

\flushleft{Aleksei Golota \\
National Research University Higher School of Economics, Russian Federation \\
Laboratory of Algebraic Geometry, NRU HSE, 6 Usacheva str.,Moscow, Russia, 119048 \\
\texttt{golota.g.a.s@mail.ru, agolota@hse.ru}}

\end{document}